\newtheorem{theorem}{Theorem}[section]
\newtheorem{definition}[theorem]{Definition}
\newtheorem{proposition}[theorem]{Proposition}
\newtheorem{corollary}[theorem]{Corollary}
\newtheorem*{remark}{Remark}
\renewcommand{\epsilon}{\varepsilon}
\DeclareMathAlphabet{\mathpzc}{OT1}{pzc}{m}{it}
\newcommand{\Z}{\mathbb{Z}}
\newcommand{\C}{\mathbb{C}}
\renewcommand{\qed}{$\hfill \square$ \smallskip \\}
\renewcommand{\phi}{\varphi}
\begin{document}
\thispagestyle{empty}
\title[2-strand twisting \& knots with identical quantum knot homologies]{2-strand twisting and knots with identical quantum knot homologies}
\author{Andrew Lobb}

\email{lobb@math.sunysb.edu}
\address{Mathematics Department \\ Stony Brook University \\ Stony Brook NY 11794 \\ USA}

\begin{abstract}
Given a knot, we ask how its Khovanov and Khovanov-Rozansky homologies change under the operation of introducing twists in a pair of strands.  We obtain long exact sequences in homology and further algebraic structure which is then used to derive topological and computational results.  Two of our applications include giving a new way to generate arbitrary numbers of knots with isomorphic homologies and finding an infinite number of mutant knot pairs with isomorphic reduced homologies.\end{abstract}

\maketitle

\section{Introduction and results}
In this paper we consider $sl(n)$ Khovanov-Rozansky homology (Khovanov homology appears as $n=2$) under the operation of adding twists in a pair of strands.  We observe stabilization of the homology as we add more twists and, looking a little deeper, reveal some further algebraic structure which we exploit for various structural and topological results.

In the remainder of this paper we shall assume that we have chosen a fixed $n \geq 2$ unless we make it clear otherwise.

First we describe some chain complexes of matrix factorizations, one such for each integer, which will be the building blocks of this paper.

\begin{definition}
\label{Tkdefn}
For $k \geq 0$, the complex $T_k$ is the $sl(n)$ Khovanov-Rozansky chain complex of direct sums of matrix factorizations corresponding to a diagram of $k$ full twists in two oppositely oriented strands, where the $2k$ crossings are positive (see Figures \ref{t1} and \ref{tk} for an explicit picture).  When $k < 0$ we take the $-2k$ crossings to be negative.
\end{definition}

\noindent It should be clear that there is an obvious way in which each of these complexes can be built from $T_1$ and $T_{-1}$ by tensor product.

\begin{proposition}
Up to homotopy equivalence $T_k \otimes T_l = T_{k+l}$, where the tensor product of complexes of matrix factorizations is taken by concatenating in the obvious way the corresponding tangle diagrams with $|2k|$ and $|2l|$ crossings.
\end{proposition}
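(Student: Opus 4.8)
The plan is to reduce the statement to two standard facts. The first is structural: the tensor product of complexes of matrix factorizations is, by construction, precisely the operation on Khovanov--Rozansky complexes induced by concatenating tangle diagrams along their common boundary points, and it is associative up to canonical homotopy equivalence (concatenation of diagrams being strictly associative). The second is the Khovanov--Rozansky invariance theorem (Khovanov's theorem when $n=2$): the complex of matrix factorizations attached to a tangle diagram depends, up to homotopy equivalence in the homotopy category of complexes of matrix factorizations over the ring of the boundary edges, only on the underlying oriented tangle, and is in particular unchanged up to homotopy by the oriented Reidemeister moves. Granting these, the proposition becomes a comparison of two diagrams.

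First I would dispose of the case in which $k$ and $l$ have the same sign, $0$ included. Here, by the observation made just after Definition~\ref{Tkdefn} together with the associativity above, $T_k \otimes T_l$ and $T_{k+l}$ are not merely homotopy equivalent but \emph{equal}: concatenating $|2k|$ same-sign crossings with $|2l|$ crossings of the same sign produces, with no Reidemeister move at all, the diagram of $|2(k+l)|$ same-sign crossings defining $T_{k+l}$ (and concatenating with $T_0$, the two-parallel-strand tangle, does nothing). For the mixed-sign case, say $k \geq 0 \geq l$, concatenating the two diagrams yields the anti-parallel braid word $\sigma^{2k}\sigma^{2l}$, which differs from the word $\sigma^{2(k+l)}$ defining $T_{k+l}$ only by a sequence of $2\min(k,-l)$ Reidemeister~II moves, each cancelling an adjacent pair $\sigma\sigma^{-1}$ at the junction; notice that no Reidemeister~I or~III move enters, since we are working inside the two-strand anti-parallel braid monoid where $\sigma\sigma^{-1}=1$ is the only relation used. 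Applying Reidemeister~II invariance of the Khovanov--Rozansky complex $2\min(k,-l)$ times then gives $T_k \otimes T_l \simeq T_{k+l}$.

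A slightly more self-contained variant for the mixed-sign case, which I might prefer to write up, is to isolate the single homotopy equivalence $T_1 \otimes T_{-1} \simeq T_0$ (two applications of the anti-parallel Reidemeister~II move to a four-crossing tangle) and then argue by induction: writing $T_k \otimes T_l = T_1^{\otimes k} \otimes T_{-1}^{\otimes(-l)}$ via the same-sign case and repeatedly replacing an innermost $T_1 \otimes T_{-1}$ by $T_0$ and absorbing the resulting unit factor, one is left after $\min(k,-l)$ steps with exactly $T_1^{\otimes(k+l)} = T_{k+l}$ when $k+l \geq 0$, or with $T_{-1}^{\otimes(-(k+l))} = T_{k+l}$ when $k+l \leq 0$.

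The only genuinely substantive ingredient is the Reidemeister~II invariance of complexes of matrix factorizations; everything else is formal bookkeeping, so the main obstacle is essentially just being entitled to quote that invariance in the matrix-factorization setting. The one point that still deserves a moment's care is that all the $T_k$ share the same boundary data, which is what makes it legitimate to regard the various homotopy equivalences as taking place in a single homotopy category and to tensor and compose them freely.
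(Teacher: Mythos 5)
Your argument is correct and is essentially the paper's proof: the same-sign case holds by definition of the concatenated diagrams, and the mixed-sign case follows from Reidemeister~II invariance of the Khovanov--Rozansky complex up to homotopy equivalence. The extra bookkeeping (counting the R-II moves, or the inductive reduction via $T_1 \otimes T_{-1} \simeq T_0$) is a harmless elaboration of the same idea.
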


\begin{proof}
For $k$ and $l$ of the same sign this is by definition, and for $k$ and $l$ of opposite sign it follows from the invariance up to homotopy equivalence of the Khovanov-Rozansky chain complex under Reidemeister move \emph{II}.
\end{proof}

\begin{figure}
\centerline{
{
\psfrag{pnodes}{$2p - 1$ nodes}
\psfrag{+}{$+$}
\psfrag{-}{$-$}
\psfrag{ldots}{$\ldots$}
\psfrag{T(D)}{$T(D)$}
\psfrag{T-(D)}{$T^-(D)$}
\psfrag{T+(D)}{$T^+(D)$}
\includegraphics[height=1in,width=1.5in]{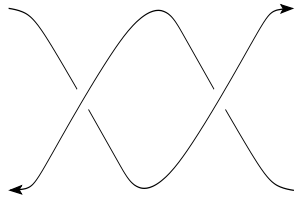}
}}
\caption{The complex $T_1$ is the $sl(n)$ Khovanov-Rozansky complex of direct sums of matrix factorizations corresponding to this diagram.  Note that there are two positive crossings in the diagram.}
\label{t1}
\end{figure}

\begin{figure}
\centerline{
{
\psfrag{pnodes}{$2p - 1$ nodes}
\psfrag{+}{$+$}
\psfrag{-}{$-$}
\psfrag{ldots}{$\ldots$}
\psfrag{T(D)}{$T(D)$}
\psfrag{T-(D)}{$T^-(D)$}
\psfrag{T+(D)}{$T^+(D)$}
\includegraphics[height=1in,width=5in]{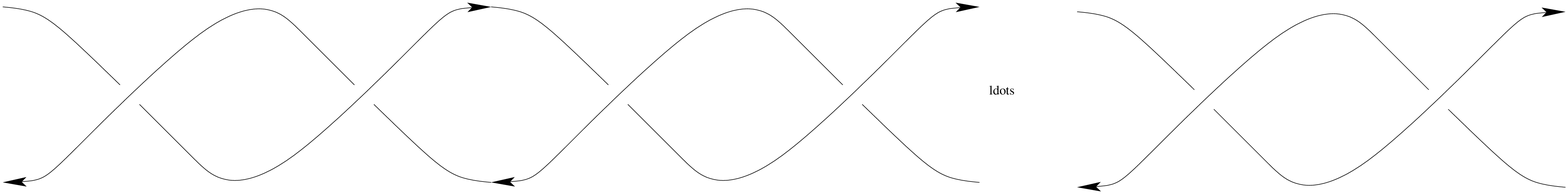}
}}
\caption{The complex $T_k = \otimes^k T_1$ is the $sl(n)$ Khovanov-Rozansky complex corresponding to the diagram above with $2k$ crossings.}
\label{tk}
\end{figure}

There are two main sections with proofs in this paper: in Section \ref{stabsection} we shall deal with the question of stabilization of the complex $T_k$ as $k \rightarrow \infty$ and prove results necessary for the topological and structural results proved in Section \ref{topsection}.  For the rest of the current section we give statements of some results whose proofs follow later and context for these results.

\subsection*{Acknowledgements}

This paper would not have been possible without the input of Daniel Krasner.  Akio Kawauchi has supplied invaluable advice concerning his paper \cite{Kaw}. The author would also like to thank Cameron Gordon, Marc Lackenby, Jacob Rasmussen, and Scott Taylor for useful emails or discussions.  Thanks to Josh Greene and Dylan Thurston for comments on earlier versions of this paper.

Much of the work in this paper was done during the Knot Homology semester at MSRI, where the author was funded as a postdoctoral fellow.

\subsection{Stabilization and exact sequences}

In this section, all complexes are understood to be complexes of matrix factorizations, and $C(K)$ and $H(K)$ stand for the $sl(n)$ Khovanov-Rozansky chain complex and homology of the knot $K$ respectively, for some fixed $n \geq 2$.  Sometimes we will mean specifically the reduced, unreduced, or equivariant (with potential $w=x^{n+1} - ax$) \cite{Kras2} homologies in which case we shall make it clear.  Otherwise results should be interpreted as holding for each of these three versions of Khovanov-Rozansky homology.

By \emph{stabilization} we mean, most basically, the existence of a complex $T_\infty$, the direct limit of a sequence of maps $T_k \rightarrow T_{k+1}$.  This complex $T_\infty$ is defined in Definition \ref{2defo}.

If we have a knot $K$ given by a diagram $D$ we may consider $T_0$ as a subtangle of $D$.  Replacing $T_0$ by $T_1, T_2, T_3, \ldots$ in $D$ we obtain a sequence of diagrams $D_1, D_2, D_3, \ldots$ and hence a sequence of knots $K_1, K_2, K_3, \ldots$.

In the chain complex $C(D_i)$, $T_i$ appears as a tensor factor.  Replacing $T_i$ by $T_\infty$ gives us a chain complex which we shall denote $C(D_\infty)$ and its homology by $H(D_\infty)$.  We have, in effect, replaced the $T_i$ tangle in $D_i$ by a ``tangle consisting of an infinite number of twists''.

In the following theorems we let $D$ be such a diagram with a subtangle of $D$ identified with $T_0$.  We write $c_-$ and $c_+$ for the number of negative crossings and for the number of positive crossings of $D$ respectively.

\begin{theorem}
\label{stab1}
For each $0 \leq i < j$ there exists a directed system of maps (to be defined)

\[ F_{i,j} : T_i \rightarrow T_j \]

\noindent that is graded of homological degree $0$ and of quantum degree $0$.  Then for $0 \leq i < j$ (we allow $j = \infty$) we have that the induced map on homology

\[ F_{i,j} : H(D_i) \rightarrow H(D_j) \]

\noindent is an isomorphism in all homological degrees $\leq 2i - c_- - 2$.

\end{theorem}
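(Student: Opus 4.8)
The plan is to first construct the maps $F_{i,j}$ locally on the tangle complexes $T_k$, and then analyze the mapping cone of $F_{i,j}$ after tensoring with the rest of the diagram $D$. Since $T_{j} = T_i \otimes T_{j-i}$ up to homotopy by the Proposition, it suffices to build a canonical map $T_0 \to T_2$ (adding a single full twist) and compose/tensor: set $F_{i,i+1} = \id_{T_i} \otimes (T_0 \to T_2)$ and $F_{i,j}$ the composite $F_{j-1,j}\circ\cdots\circ F_{i,i+1}$; the case $j=\infty$ is then the structure map into the direct limit $T_\infty$ of Definition \ref{2defo}. The map $T_0 \to T_2$ should be the obvious inclusion of the ``$0$-resolution'' (the identity tangle $T_0$, which is the two parallel strands) as the tail of the complex $T_2$ for two positive full twists; concretely, $T_2$ as a complex has the all-$0$-smoothing matrix factorization — the one carrying the ``turnback''-free resolution — sitting in its extreme cohomological position, and $T_0$ maps in there. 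Checking that this is a chain map, and that it has homological and quantum degree $0$ after the usual shifts built into the Khovanov--Rozansky complex of $2$ positive crossings, is a local computation with matrix factorizations; this is routine but needs care with grading conventions.

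Next I would set $G_{i,j}$ to be the mapping cone of $F_{i,j}\colon T_i \to T_j$, so that we have a distinguished triangle $T_i \to T_j \to G_{i,j} \to$. Tensoring the whole triangle with the complementary tangle coming from $D$ (which is an exact functor up to homotopy on complexes of matrix factorizations), we get a long exact sequence
\[
\cdots \to H^{s}(D_i) \to H^{s}(D_j) \to H^{s}(\mathrm{Cone}) \to H^{s+1}(D_i)\to\cdots,
\]
so $F_{i,j}$ is an isomorphism in homological degree $s$ whenever $H^{s}(\mathrm{Cone})=H^{s+1}(\mathrm{Cone})=0$, where $\mathrm{Cone}$ is $D$ with the $T_0$ subtangle replaced by $G_{i,j}$. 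Hence everything reduces to a \emph{connectivity estimate}: the cone $G_{i,j}$, as a complex of matrix factorizations, is supported (homotopy equivalent to a complex supported) in homological degrees $\geq$ some bound growing linearly in $i$. The key claim to establish is that $G_{i,j}$ is homotopy equivalent to a complex whose lowest nonzero term sits in homological degree roughly $2i$ (relative to the normalization of $T_i$), because the first $\sim 2i$ terms of the complex $T_j$, reading from the $0$-smoothing end, agree with those of $T_i$ and are exactly killed by $F_{i,j}$.

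The main obstacle — and the technical heart of Section \ref{stabsection} — is precisely this claim that the cone is highly connected, i.e.\ that after Gaussian elimination the complex $T_j$ looks like $T_i$ in a range of cohomological degrees that expands linearly as $i\to\infty$. The natural way to prove it is by induction on $i$: one shows that tensoring with one more full twist $T_2$ and simplifying (delooping / Gaussian elimination, using the local relations for $sl(n)$ matrix factorizations — the ``digon'' and ``square'' relations) only modifies the complex near its ``twisted'' end and reproduces the previous complex two homological steps further along. Once the connectivity of $G_{i,j}$ is pinned down as ``$\geq 2i - 1$'' in the intrinsic normalization of $T_i$, one simply tensors with $D$ and tracks how the shifts interact: the complex $C(D_i)$ carries an overall homological shift by $-c_-$ (coming from the $c_-$ negative crossings of $D$), and the interplay with the $-1$ from the connecting homomorphism in the long exact sequence yields exactly the stated range ``$\leq 2i - c_- - 2$'' in which $F_{i,j}$ is an isomorphism on $H(D_i)$. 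Checking that the bound is sharp in these conventions (the ``$-2$'' versus ``$-1$'') is the kind of bookkeeping I would do last, after the structural argument is in place.
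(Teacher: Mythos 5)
Your high-level skeleton (a directed system defined locally on the twist tangle, then cone, tensor with the rest of the diagram, long exact sequence, and a connectivity estimate for the cone) is the same as the paper's, but the map you actually construct is the wrong one for this theorem. You send $T_0$ (the parallel, turnback-free configuration $Z$) ``into its extreme cohomological position'' in the twist complex. In Krasner's description (Theorem \ref{input}), that position is the \emph{top} homological degree $2k$: $T_k$ has $V$'s (the turnback factorization) in degrees $0,\dots,2k-1$ and its single $Z$ in degree $2k$. A map matching the top ends is, after the forced shift $[2(i-j)]\{2n(j-i)\}$, precisely the paper's $G_{i,j}$; its cone is concentrated in \emph{low} homological degrees (Proposition \ref{coneG}) and yields isomorphisms in degrees $\geq c_+$, i.e.\ Theorem \ref{stab2}, not the range $\leq 2i-c_--2$ claimed here. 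The map needed for Theorem \ref{stab1} must instead agree with the identity on the low-degree $V$'s (degrees $<2i$), and in degree $2i$ it must send the terminal $Z$ of $T_i$ to the $V[2i]$ of $T_j$; there is no ``obvious inclusion'' doing this. The only natural candidate is the saddle map, rescaled to $S'=(-1/(n+1))S$, and verifying that this is a chain map is a genuine (if small) matrix-factorization computation resting on $S^2=-(n+1)A$ and $(x_2-x_4)\circ S=0$. None of this appears in your proposal.

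Second, even granting a correct basic map $f\colon T_0\to T_1$ and your definition $F_{i,i+1}=\mathrm{id}_{T_i}\otimes f$, the ``connectivity of the cone grows linearly in $i$'' step is exactly where the content lies, and you defer it to an unproved induction. Note that $\mathrm{Cone}(\mathrm{id}_{T_i}\otimes f)\simeq T_i\otimes\mathrm{Cone}(f)$ is a priori supported in degrees starting near $-1$, so no naive support count gives the bound; one must simplify the tensor product, which is essentially re-proving Krasner's theorem. The paper short-circuits this by taking Theorem \ref{input} as the input: with $T_i$ and $T_j$ both in Krasner form, $F_{i,j}$ has identity components in all degrees $<2i$, Gaussian elimination cancels them, and the cone is manifestly $Z[2i-1]\to V\to\cdots\to Z[2j]$ (Proposition \ref{coneF}), supported in degrees $\geq 2i-1$; tensoring with the rest of the diagram (which carries the $-c_-$ shift) and running the long exact sequence then gives the stated range. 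So the two missing ingredients are the explicit compact form of $T_k$ and the correctly scaled saddle component of $F_{i,j}$; without them both the definition of the directed system and the connectivity estimate are unsupported.
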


\noindent Using square brackets to denote a shift in homological grading, and curly brackets to denote a shift in quantum grading, we also have:

\begin{theorem}
\label{stab2}
For each $0 \leq i < j$ there exists a directed system of maps (to be defined)

\[ G_{i,j} : T_i \rightarrow T_j [ 2(i-j) ] \{ 2n(j-i) \} \]

\noindent that is graded of homological degree $0$ and of quantum degree $0$.  Then for $0 \leq i < j$ we have that the induced map on homology

\[ G_{i,j} : H(D_i) \rightarrow H(D_j) [ 2(i-j) ] \{ 2n(j-i) \}  \]

\noindent is an isomorphism in all homological degrees $\geq c_+$.
\end{theorem}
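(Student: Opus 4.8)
\medskip
\noindent\textit{Proof plan.} The idea is to run the mirror-image of the argument behind Theorem \ref{stab1}. There, the maps $F_{i,j}$ come from the end of the minimal complex of the full twist $T_1$ at which a copy of $T_0$ appears; here I would build the $G_{i,j}$ from the opposite end. Write $W$ for the ``wide edge'' tangle, i.e.\ the singular (non-oriented) resolution of a crossing between the two strands of $T_0$; a tangle built from copies of $W$ contributes no crossings when inserted into a diagram. The only thing that genuinely has to be proved is the structural claim $(\star)$: the mapping cone $\mathrm{Cone}(G_{i,j})$ is homotopy equivalent to a bounded complex each of whose terms is a direct sum of shifted copies of $W$, and this complex is supported in homological degrees $\le N$ for a constant $N$ that \emph{does not depend on $i$ or $j$}. (Contrast Theorem \ref{stab1}: there the analogous cone is a complex of wide edges supported in homological degrees $\ge$ a quantity growing like $2i$, which is precisely why the bound there improves with $i$ while the bound here is uniform.)

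To set up the $G_{i,j}$ and prove $(\star)$ I would first reduce to a basic case. Since $T_j = T_i \otimes T_{j-i}$ and tensoring with $T_0$ is the identity, it suffices to construct $G_{0,m}\colon T_0 \to T_m[-2m]\{2nm\}$ for $m \ge 1$ and to set $G_{i,j} := \mathrm{id}_{T_i} \otimes G_{0,j-i}$. A clean way to produce $G_{0,m}$ is by duality: passing to duals of matrix factorizations corresponds to mirroring tangle diagrams, so that $T_m^{*}$ is, up to an overall homological and quantum shift, the complex $T_{-m}$ of Definition \ref{Tkdefn}. The analysis underlying Theorem \ref{stab1} identifies $\mathrm{Cone}(F_{0,m})$, up to homotopy, with a complex of wide edges supported in strictly positive homological degrees (it is the ``wide-edge part'' of $T_m$); dualizing the distinguished triangle $\mathrm{Cone}(F_{0,m})[-1] \to T_0 \to T_m$, re-identifying $T_m^{*}$ with $T_{-m}$, and tensoring the result on $T_m$ then produces $G_{0,m}$, whose cone is a complex of wide edges supported in homological degrees bounded \emph{above} by an absolute constant, with the shift $[-2m]\{2nm\}$ forced by the mirror identification. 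The one genuinely geometric input needed to pass from $G_{0,m}$ to $G_{i,j} = \mathrm{id}_{T_i}\otimes G_{0,j-i}$ without spoiling this bound is the Reidemeister-I-type fact that a wide edge absorbs an adjacent full twist up to grading shift, i.e.\ $W \otimes T_k$ is homotopy equivalent to a direct sum of shifted copies of $W$ lying in a bounded range of homological degrees; this follows from the MOY relations. Granting all of this, $(\star)$ holds.

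Given $(\star)$, the topological conclusion is formal. Since forming a Khovanov--Rozansky complex of a diagram amounts to tensoring the complexes of its tangle pieces, and tensoring is exact,
\[
\mathrm{Cone}\bigl(G_{i,j}\colon C(D_i) \to C(D_j)[2(i-j)]\{2n(j-i)\}\bigr)\ \simeq\ \mathrm{Cone}(G_{i,j}) \otimes (D \setminus T_0).
\]
By $(\star)$ the right-hand side is a bounded complex supported in homological degrees $\le N$, each term of which is the Khovanov--Rozansky complex of the diagram $D^{W}$ got from $D$ by replacing the subtangle $T_0$ with a wide edge. Inserting a wide edge creates no new crossings, so $D^{W}$ has exactly $c_+$ positive crossings, whence $H(D^{W})$ is supported in homological degrees $\le c_+$; hence the total complex above has homology supported in homological degrees $\le c_+ + N$. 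Tracking the constants through the duality and the wide-edge absorptions shows that one may take $N \le -1$, and the long exact sequence of the mapping cone then gives that $G_{i,j}$ is an isomorphism on homology in all homological degrees $\ge c_+$. Since $N$ is independent of $i$ and $j$, so is this bound.

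The main obstacle is $(\star)$: isolating the ``top'' end of the minimal complex of $T_1$, assembling the $G_{i,j}$ into a directed system, and --- most delicately --- chasing the homological and quantum shifts through the mirror duality and through the absorptions $W \otimes T_k \simeq \bigoplus W\{\cdots\}$ so that the cone really lies in a range of homological degrees bounded independently of $i$ and $j$; everything after that is bookkeeping. The argument is uniform over the reduced, unreduced, and equivariant (potential $w = x^{n+1} - ax$) versions: the facts about $T_k$, $G_{i,j}$ and $W$ are proved the same way in each setting, and the estimate ``$H(D^{W})$ supported in homological degrees $\le c_+$'' depends only on the cube of resolutions, not on the potential or on reduction.
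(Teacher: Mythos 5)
Your overall skeleton is the right one --- show the mapping cone of $G_{i,j}$ is (homotopy equivalent to) a complex of crossingless tangle pieces concentrated in homological degrees bounded above by a constant independent of $i$ and $j$, then tensor with the rest of the diagram and run the long exact sequence against the crossing count $c_+$. But the proposal has a genuine gap exactly where you place it: the structural claim $(\star)$ is not proved, and as stated it is not what is true. By Krasner's explicit description of $T_k$ (Theorem \ref{input}), $T_k$ is homotopy equivalent to a complex whose terms are the oriented resolutions $V$ (in degrees $0,\dots,2k-1$) and a single $Z$ (in degree $2k$); no wide edges appear at all. The paper defines $G_{i,j}$ directly on this minimal form: after the shift $[2(i-j)]\{2n(j-i)\}$ every nonzero term of $T_i$ lines up, in homological and quantum degree, with the corresponding term of $T_j$, and $G_{i,j}$ is literally the identity on each of them. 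This makes the directed-system property obvious and makes the cone computation a one-line Gaussian elimination: $Co(G_{i,j}) \simeq V[2(i-j)] \to \cdots \to V[-1]$ (Proposition \ref{coneG}), a complex of crossingless $V$'s supported in homological degrees $\le -1$. Your route instead constructs $G_{0,m}$ only up to homotopy through a chain of identifications (dualizing $F_{0,m}$, identifying $T_m^{*}$ with $T_{-m}$, tensoring with $T_m$, and collapsing via a ``wide edge absorbs a twist'' lemma); you never obtain actual chain maps forming a directed system, never verify compatibility $G_{j,k}\circ G_{i,j}\simeq G_{i,k}$, and never carry out the collapse of the resulting large tensor complex or the bookkeeping that is supposed to yield $N\le -1$ --- you explicitly defer all of this as ``the main obstacle.'' Since that obstacle is the entire content of the theorem, the proposal as written is incomplete.

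There is also a conceptual misidentification worth flagging: you describe both $Co(F_{i,j})$ and $Co(G_{i,j})$ as complexes of wide edges $W$. In fact $T_0$ (the parallel, antiparallel-oriented strands) is not a vertex of the cube of resolutions of the twist region at all, and the maps $F_{i,j}$, $G_{i,j}$ only make sense on Krasner's simplified complexes, whose terms are $V$ and $Z$; accordingly the cones are built from $V$'s (and, for $F$, two $Z$'s), not from $W$'s. Your degree-support argument survives this substitution (what matters is only that the pieces carry no crossings, so inserting them into $D$ leaves homology supported in degrees at most $c_+$ above the cone's top degree), but the proposed proof of $(\star)$ via MOY absorption of twists by wide edges is aimed at the wrong objects; the absorption you actually need is the Reidemeister-type simplification of a twist against the turnback resolution, and even then you would be reproving, in a roundabout way, what Theorem \ref{input} already hands you for free. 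Finally, a small point the paper is careful about and you elide: to pass from the cone statement for tangles to the long exact sequence for $H(D_i)\to H(D_j)$ in the reduced theory one needs the short exact sequence of full-diagram complexes to persist under reduction, which the paper justifies by flatness of $\C[x]/x^n \to \C$.
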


\begin{remark}
To shorten our exposition, in this paper we restrict ourselves to the tangles $T_k$ where the $2k$ crossings are positive.  For each theorem we state, there is a dual theorem using negative crossings that the interested reader should have no trouble in stating and proving for herself.
\end{remark}

If this were all that there were to say about the algebra, we would not expect to be able to prove interesting results.  However, the maps $F_{i,j}$ and $G_{i,j}$ mesh well together, in a sense that we shall later make explicit.

From homology theories in different branches of mathematics we know that short exact sequences of chain complexes (and hence long exact sequences of homology groups) are useful tools when they are found in a homology theory.  And even more so are morphisms of short exact sequences of chain complexes (giving natural maps between long exact sequences of homology groups).  We find these relatively easily in our set-up and it is these that provide the power to start proving our later topological and structural results.

The results on exact sequences are best stated in the next section, after Theorems \ref{stab1} and \ref{stab2} are established.  For those wishing to jump ahead, these results appear as Propositions \ref{usefulseq} and \ref{anotherseq}.

We do not expect that the topological and structural corollaries that we find represent all of that which can be proved by making use of our exact sequences.  We therefore end this subsection with an encouragement for others to play with these exact sequences and see what else may drop out!

\subsection{Topological and structural results on Khovanov-Rozansky homology}

In \cite{Ras1} Rasmussen gives a homomorphism $s : K \mapsto s(K) \in 2\Z$ from the smooth knot concordance group to the additive group of even integers.  Furthermore, he shows that $s$ provides a lower bound $| s(K) | / 2$ on the smooth slice genus of a knot $K$.  Rasmussen's construction proceeds by extracting an even integer $s(K)$ from the $E_\infty$ page of a spectral sequence which has $E_2$ page the standard Khovanov homology of $K$.  This spectral sequence is essentially due to Lee \cite{Lee}.

Since this seminal paper, there have been generalizations of this result for other quantum knot homologies.  In particular Gornik \cite{G} has constructed a spectral sequence with $E_2$ page $sl(n)$ Khovanov-Rozansky homology $H(K)$.  In \cite{L3}, the author shows that the $E_\infty$ page of Gornik's spectral sequence is equivalent to an even integer $s_n(K)$ which gives a homomorphism $s_n : K \mapsto s_n(K) \in 2\Z$ from the smooth knot concordance group to the additive group of even integers.  Earlier work by the author \cite{L1} and independently by Wu \cite{Wu1}, implies that $|s_n(K)|/2(n-1)$ is a lower bound on the smooth slice genus of $K$.

In \cite{L3} it is shown that the $E_\infty$ page of Gornik's spectral sequence is isomorphic as a graded group to the homology of the unknot but with a shift in quantum grading $E_\infty \cong  H(U) \{ s_n(K) \}$, so that all the information about $E_\infty$ is contained in the even integer $s_n(K)$.

In \cite{Ras1}, Rasmussen asked if the concordance homomorphism $s$ coming from Khovanov homology was the same as the concordance homomorphism $\tau$ coming Heegaard-Floer knot homology, a conjecture motivated by the observation that $s$ and $\tau$ share many of the same properties.  A negative answer to Rasmussen's question was first provided by Hedden and Ording \cite{HO}.

The homomorphisms $s_n$ also share many properties with $s$ and moreover both $s$ and $s_n$ arise from the quantum world.  It is an interesting open question whether the homomorphisms $s_n$ are equivalent to the homomorphism $s = -s_2$ (see Conjecture $1.5$ of \cite{L3}).  Partly as a first step towards this question, in this paper we give a way in which the standard Khovanov-Rozansky homology interacts with $s_n$.

\begin{theorem}
\label{changesn}
Let the knot $K_0$ be obtained by changing a crossing of $K_{-1}$ from negative to positive as in Figure \ref{crossdiag}.  Then we know by \cite{L3} and Corollary $3$ of Livingston's \cite{Liv} that we must have

\[ s_n(K_0) \leq s_n(K_{-1}) {\rm .} \]

\noindent If in fact we have strict inequality $s_n(K_0) < s_n(K_{-1})$ then the homology group in homological degree $2p$ satisfies

\[ H^{2p}(K_p) \not= 0 \]

\noindent for the sequence of knots $K_1, K_2, \ldots$ shown in Figure \ref{crossdiag}.
\end{theorem}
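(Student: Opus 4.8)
The plan is to route everything through the equivariant theory (potential $w = x^{n+1}-ax$), where $s_n$ lives, and to let the long exact sequences of Propositions~\ref{usefulseq} and~\ref{anotherseq} convert the filtration jump $s_n(K_0)<s_n(K_{-1})$ into a concrete non-vanishing, which is then transported along the whole family $K_1,K_2,\dots$ by the stable-range isomorphisms of Theorems~\ref{stab1} and~\ref{stab2}.

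First I would record the ``staircase'' shape of the complexes $C(K_p)$. Resolving the $2p$ positive crossings of the inserted $T_p$, exactly one vertex of the resolution cube is the trivial tangle $T_0$ and contributes $C(K_0)$, while every other vertex contains a turnback and hence contributes a quantum shift of $C(L)$, where $L$ is the single fixed link obtained from $D$ by capping off the two strands of $T_0$. Propositions~\ref{usefulseq} and~\ref{anotherseq} repackage this as long exact sequences relating $H(K_{p-1})$, $H(K_p)$ and (shifted copies of) $H(L)$; and the crossing-change hypothesis is precisely the analogous triangle relating $H(K_{-1})$, $H(K_0)$ and a shift of the \emph{same} $L$, since changing that one crossing of the clasp in $K_{-1}$ (and cancelling by Reidemeister~\emph{II}) is nothing but the ``$p=-1\to p=0$'' instance of one staircase step, the step $K_p\to K_{p-1}$ being a single crossing change on every level. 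Because all $2p$ crossings of $T_p$ are positive (contributing nothing to the writhe correction) and the turnback vertices are spread over homological degrees $1,\dots,2p$ relative to the $T_0$ vertex, the extreme copy of $C(L)$ at the end of the staircase feeds precisely homological degree $2p$ of $C(K_p)$ — concretely, homological degree $2p$ of $C(K_p)$ receives $H^{0}(L)$ with a determined quantum shift — and, by the self-similarity just noted, it does so in exactly the homological slot of $H(L)$ where the crossing-change triangle will deposit a class.

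Now I would feed in $s_n$. By \cite{L3}, the equivariant homology of any knot is free of rank $n$, concentrated in homological degree $0$, with $q$-filtration levels read off from $s_n$ (since $E_\infty\cong H(U)\{s_n\}$). If the connecting homomorphism of the equivariant crossing-change triangle vanished, then $H_{\mathrm{eq}}(K_{-1})$ would inherit its $n$ generators, with their filtration levels, from $H_{\mathrm{eq}}(K_0)$, contradicting $s_n(K_0)<s_n(K_{-1})$; hence that map is non-zero, producing a non-zero equivariant class of $L$ in a determined (homological, filtration) position. Since $E_\infty$ of the Gornik spectral sequence is a subquotient of $E_1=H(L)$ degreewise in the homological grading, the corresponding homological degree of $H(L)$ is non-zero — and by the previous paragraph this is exactly the slot of the top shifted copy of $H(L)$ appearing in the $K_p$-staircase, i.e. the slot that maps to $H^{2p}(K_p)$.

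Finally I would propagate and rule out cancellation. Applying the $K_p$-staircase long exact sequence in homological degree $2p$ and invoking Theorem~\ref{stab2} — the maps $G_{i,j}$ are isomorphisms in homological degrees $\geq c_+$, and they mesh with the maps $F_{i,j}$ of Theorem~\ref{stab1} — shows that the picture near the top of $H(K_p)$ is $p$-independent in the stable range, so it suffices that the class survive for a single large $p$, which is what the last paragraph gives. The main obstacle is the triple bookkeeping, tracking homological degree, quantum degree, and equivariant $q$-filtration simultaneously through the iterated mapping cones, and in particular certifying that the class in homological degree $2p$ is not killed by a differential into or out of homological degree $2p\pm1$ (whose source/target sees only the second-from-extreme copy of $H(L)$); this is exactly where the precise position of the extreme turnback vertex of $T_p$ and the stable-range isomorphisms must be played off each other, and the finitely many small values of $p$ with $2p<c_+$ need a separate, hands-on appeal to the exact sequences.
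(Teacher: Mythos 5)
Your overall frame---work equivariantly, use the exact sequences of Proposition~\ref{usefulseq}, and let the jump $s_n(K_0)<s_n(K_{-1})$ force a non-zero map out of $H^0(K_0)$ into the third term of the crossing-change sequence---is the same as the paper's, and that first step is essentially right (though note the third term is $M=H(Co(F_{0,1}))$, not shifted copies of $H(L)$, and the equivariant homology is \emph{not} free of rank $n$: it is free of rank $n$ plus torsion, and it is precisely the torsion-ness of $M$ together with the mismatch of quantum degrees of the free parts that makes the map $H^0(K_0)\to M^2$ non-zero).

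The genuine gap is in the propagation to $H^{2p}(K_p)$. The paper does not use the stable-range isomorphisms of Theorem~\ref{stab2} here at all, and it never has to ``certify that the class is not killed by a differential.'' Instead it uses the full strength of Proposition~\ref{usefulseq}: because $Co(F_{i,i+1})=Co(F_{0,1})[2i]\{-2ni\}$, the rows for $(K_{-1},K_0)$ (obtained by extending the ladder upward) and for $(K_{p-1},K_p)$ share literally the same module $M^2$ in the third slot, with the vertical map on that slot the identity. Commutativity of the right-hand square then says the non-zero map $H^0(K_0)\to M^2$ factors through $H^{2p}(K_p)\to M^2$, so $H^{2p}(K_p)\neq 0$ for every $p\geq 1$, with no case analysis. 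Your substitute mechanism does not close this: the $G_{i,j}$ of Theorem~\ref{stab2} are isomorphisms only in homological degrees $\geq c_+$, so at best they carry a non-vanishing $H^{2i}(K_i)\neq 0$ with $2i\geq c_+$ up to larger $p$; you explicitly leave the finitely many $p$ with $2p<c_+$ unproved, and---more seriously---you never actually establish the base non-vanishing for even one large $p$, since the ``class survives the iterated mapping cone differentials'' step is exactly what you flag as the main obstacle. That survival problem is what the naturality (identity on the $M$ column) in Proposition~\ref{usefulseq} is designed to bypass, and without it your argument does not yet prove the theorem.
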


\begin{figure}
\centerline{
{
\psfrag{K0}{$D_0$}
\psfrag{K-1}{$D_{-1}$}
\psfrag{Ki}{$D_p$}
\psfrag{ldots}{$\ldots$}
\psfrag{T(D)}{$T(D)$}
\psfrag{T-(D)}{$T^-(D)$}
\psfrag{T+(D)}{$T^+(D)$}
\includegraphics[height=1.5in,width=5in]{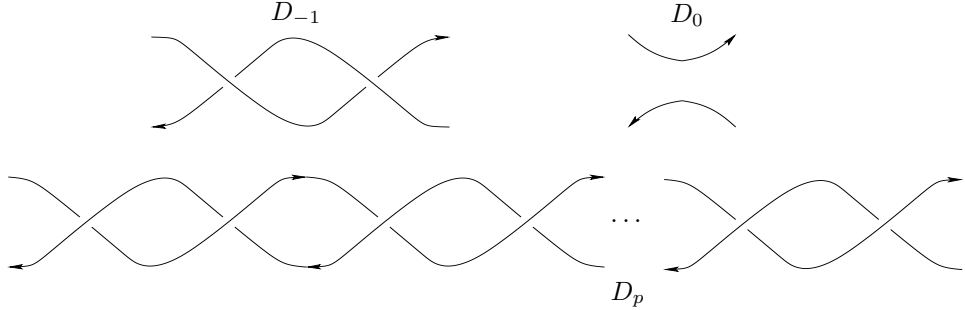}
}}
\caption{Here we show a knot $K_{-1}$ differing from a knot $K_0$ by a single crossing change.  We have drawn local pictures of diagrams of these knots.  The knots $K_p$ for $p \geq 1$ have diagrams $D_p$ formed by making $p$ further positive crossing changes at the same site as shown.  Alternatively, one can think of the knot $K_{-1}$ and the knots $K_p$ as obtained from $K_0$ by replacing the tangle $T_0$ shown in $D_0$ by $T_{-1}$ or $T_p$ respectively.}
\label{crossdiag}
\end{figure}

By the definition of the Khovanov-Rozansky chain complex it is then clear that we have the following:

\begin{corollary}
\label{cheapcor}
Given the conditions of Theorem \ref{changesn} the knot $K_p$ must have at least $2p$ positive crossings in any diagram. \qed
\end{corollary}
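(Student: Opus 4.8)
The plan is to combine the conclusion of Theorem \ref{changesn} with the elementary fact that the Khovanov--Rozansky chain complex of a knot diagram is supported in a range of homological degrees controlled entirely by the numbers of positive and negative crossings. Concretely, if $D$ is any diagram of a knot, with $c_+$ positive crossings and $c_-$ negative crossings, then $C(D)$ is built as the total complex of a cube of resolutions in which each positive crossing contributes a two-term factor occupying homological degrees $0$ and $1$, each negative crossing contributes a two-term factor occupying homological degrees $-1$ and $0$, and after the standard overall grading shift the result is concentrated in homological degrees between $-c_-$ and $c_+$. Since homology is a subquotient of the chain groups, this forces $H^m(K) = 0$ for every $m > c_+$; in other words, the top nonvanishing homological degree of $H(K)$ is a lower bound for $c_+$ over all diagrams of $K$.

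With this in hand the corollary is immediate: under the hypotheses of Theorem \ref{changesn} we have $H^{2p}(K_p) \neq 0$ for each $p \geq 1$, so applying the previous observation to an arbitrary diagram $D$ of $K_p$ yields $c_+(D) \geq 2p$, which is exactly the claim. The same reasoning applies verbatim to the reduced, unreduced, and equivariant versions of the theory, since in each case the complex of a diagram occupies the same range of homological degrees.

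There is essentially no obstacle; the only point deserving a moment's care is the bookkeeping of conventions — one must confirm that the normalization of ``homological degree'' used in Theorem \ref{changesn} is the one under which positive crossings displace the complex in the positive direction, so that it is indeed $c_+$, rather than $c_-$, that bounds the degree $2p$. Once that convention is pinned down the statement follows with no further work, which is why we record it here without a separate proof.
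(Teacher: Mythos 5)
Your argument is correct and is exactly the paper's (implicit) reasoning: the corollary is stated with no separate proof precisely because the Khovanov--Rozansky complex of a diagram with $c_+$ positive and $c_-$ negative crossings is supported in homological degrees between $-c_-$ and $c_+$, so $H^{2p}(K_p)\neq 0$ forces $c_+ \geq 2p$ in any diagram. Your care about the grading convention is also consistent with how the paper uses $c_-$ and $c_+$ in Theorems \ref{stab1} and \ref{stab2}.
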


\noindent In other words, the crossings in $K_p$ shown in Figure \ref{crossdiag} are in some sense essential.  We note that each $s_n$ provides a tight bound on the the unknotting number of a torus knot and in the standard diagram of a torus knot, a single crossing change anywhere results in a diagram with a smaller unknotting number.  Hence Corollary \ref{cheapcor} can be applied in this situation.

The exact sequences that we are using work best when we can identify one of the terms.  In particular, we expect to be able to say useful things about knots with unknotting number equal to 1.

\begin{theorem}
\label{snunknot}
We consider the situation of Figure \ref{crossdiag} where we take $K_0 = U$, the unknot.  Then we have

\[ s_n(K_p) = s_n(K_1) \]

\noindent for all $p \geq 1$.
\end{theorem}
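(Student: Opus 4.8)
The plan is to run the long exact sequences of Section~\ref{topsection} (Propositions~\ref{usefulseq} and~\ref{anotherseq}) through the Lee/Gornik deformation of the differential, using the hypothesis $K_0 = U$ to identify one of the terms; a grading chase in the resulting long exact sequence of deformed homologies should then pin $s_n$ down. First I would record the easy half of the equality. As in the statement of Theorem~\ref{changesn} (by \cite{L3} and Corollary~3 of \cite{Liv}), the passage from $K_p$ to $K_{p+1}$ is, up to a Reidemeister~II move, a negative-to-positive crossing change (Figure~\ref{crossdiag}), so $s_n(K_{p+1}) \le s_n(K_p)$; together with $s_n(K_0) = s_n(U) = 0$ this exhibits $s_n(K_1) \ge s_n(K_2) \ge \cdots$ as a non-increasing sequence of non-positive even integers. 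It therefore remains to prove the reverse inequality, i.e. that this sequence is constant.

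For that I would proceed in two steps. The first is eventual constancy: by Theorem~\ref{stab1}, $F_{p,\infty}$ is a graded isomorphism $H(D_p) \to H(D_\infty)$ in all homological degrees $\le 2p - c_- - 2$, which for $p$ large contains homological degree $0$ and a large neighbourhood of it. The deformed differential still has homological degree $+1$, so a filtered chain map that induces an isomorphism on associated-graded homology in a range of homological degrees is already a quasi-isomorphism of the deformed complexes in that range (apply this to the mapping cone of $F_{p,\infty}$ together with its filtration spectral sequence). Hence for $p$ large the deformed homology of $D_p$ in homological degree $0$ — which for a knot is free of rank $n$, supported in quantum degrees $s_n(K_p) + \{-(n-1),\dots,n-1\}$ — agrees with that of $D_\infty$, so $s_n(K_p)$ stabilizes to a fixed value as $p \to \infty$. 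The second step propagates this value back to $s_n(K_1)$, and here $K_0 = U$ is used: the exact sequence of Proposition~\ref{usefulseq} (or, in the range where its third term is awkwardly placed, of Proposition~\ref{anotherseq}, the two ``meshing'' as advertised) relates $H(D_p)$ and $H(D_{p+1})$ to a third term which, once $K_0 = U$, is computable — it is built from the deformed homology of the \emph{fixed} link $L$ obtained from $D$ by replacing $T_0$ by the cap--cup tangle, entering with homological and quantum grading shifts that move it off homological degree $0$. The sequence survives the deformation (it comes from a local modification of the twist region), so in homological degree $0$ the third term drops out of the long exact sequence and we obtain a quantum-graded isomorphism $H^0(D_p) \cong H^0(D_{p+1})$ of deformed homologies, giving $s_n(K_p) = s_n(K_{p+1})$; running this over $p \ge 1$ yields $s_n(K_p) = s_n(K_1)$, which together with the non-increasing property (and the stabilization above as a consistency check) completes the argument.

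I expect the main obstacle to lie in this last propagation step: one must identify precisely the grading shifts with which the fixed cap--cup term enters each of the two exact sequences and verify that, for every $p \ge 1$, at least one of the two places it away from homological degree $0$ and from the adjacent degrees $\pm 1$ that intervene in the long exact sequence — equivalently, that the change in $s_n$ under a single twisting step is controlled entirely by the local picture and so cannot be nonzero at some small value of $p$ while vanishing eventually. A secondary technical point is the interchange of the deformed differential with the maps of Theorems~\ref{stab1} and~\ref{stab2} (and with the crossing resolution producing the exact sequences), which should be routine since the Lee/Gornik deformation is local.
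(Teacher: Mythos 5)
Your first two steps (monotonicity of $s_n$ under the twisting, and eventual constancy of $s_n(K_p)$ via the equivariant form of Theorem \ref{stab1}) are fine, but they are not where the difficulty lies, and your propagation step has a genuine gap. You want the third term of the long exact sequence to vanish in and next to homological degree $0$ for \emph{every} $p \geq 1$, so as to get $H^0(K_p) \cong H^0(K_{p+1})$. But in Proposition \ref{usefulseq} that third term is $M = H(Co(F_{0,1}))$, the homology of the cone of $F_{0,1}$ tensored with the chain complex of the rest of the diagram; its homological support spreads over a range governed by the crossings of $D$ outside the twist region (roughly from $-c_- - 1$ up to $c_+ + 2$), and the shift with which it enters the row comparing $K_{p-1}$ and $K_p$ only pushes it past degree $0$ once $2p$ exceeds roughly $c_-$. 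So the term drops out only for large $p$ --- which is just Theorem \ref{stab1} again --- while for small $p$, including the crucial comparison of $K_1$ with $K_2$, neither Proposition \ref{usefulseq} nor Proposition \ref{anotherseq} places it away from degree $0$ (in Proposition \ref{anotherseq} the $N$-terms sit in fixed degrees near $-1,0$ independently of $p$, and that sequence compares $H^0(K_p)$ with $H^2(K_{p+1})$, not with $H^0(K_{p+1})$). The hypothesis $K_0 = U$ does not make $M$ vanish near degree $0$, and one should not expect an isomorphism $H^0(K_p) \cong H^0(K_{p+1})$ by such a vanishing argument; only the free part is controlled.

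The paper closes exactly this gap by a different mechanism, working equivariantly with $w = x^{n+1} - ax$, where $s_n(K)$ is read off from the quantum degrees of the free part of $H^0(K)$ and where $M$ is a torsion $\C[a]$-module. One runs the commutative ladder of Proposition \ref{usefulseq} from the row containing $K_0 = U$ all the way down to the row containing $K_{p-1}$ and $K_p$: since the unknot has no homology in nonzero homological degrees, exactness in the top row makes the map $\psi$ from a group $H^{*}(K_1)$ in negative homological degree (hence torsion) onto the relevant $M$-term an isomorphism; commutativity with the identity on $M$ then shows that $\phi : H^0(K_p)\{2n(p-1)\} \rightarrow M$ is already surjective when restricted to the torsion submodule, so one may choose a decomposition $H^0(K_p) = Fr \oplus tor$ with $\phi \vert_{Fr} = 0$. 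If $s_n(K_p) \neq s_n(K_{p-1})$, the free parts of $H^0(K_{p-1})$ and $H^0(K_p)$ occupy different quantum degrees, and exactness would force $\phi \vert_{Fr} \neq 0$, a contradiction. Your proposal has no substitute for this torsion/free bookkeeping, and the obstacle you flag at the end is precisely the point at which the argument, as you set it up, fails.
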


We mentioned above that sometimes by $H(K)$ we shall mean the equivariant Khovanov-Rozansky homology \cite{Kras2} with potential $w = x^{n+1} - ax$.  Here, all the modules involved in the Khovanov-Rozansky complex are free $\C[a]$-modules where $a$ has quantum grading $2n$.  The reason we are interested in this version of Khovanov-Rozansky homology is that the $s_n$ invariant is then built into the homology.  In fact for any knot $K$, we have that the equivariant homology with this potential satisfies

\[ H(K) = tor \oplus \bigoplus_{l=1}^{n}\C[a] [0] \{ 2l - n - 1 + s_n(K) \} \]

\noindent where $tor$ is a finitely-generated torsion $\C[a]$-module.

To see this, observe that $C(K)$, as a freely-generated graded complex of $\C[a]$-modules is chain homotopy equivalent to a sum of complexes of the form

\begin{enumerate}
\item $0 \rightarrow \C[a] \rightarrow 0$ and
\item $0 \rightarrow \C[a] \stackrel{a^k}{\rightarrow} \C[a] \rightarrow 0$.
\end{enumerate}

\noindent Setting $a = 0$ we recover standard Khovanov-Rozansky homology, while setting $a=1$ destroys the quantum grading and gives us Gornik's version of Khovanov-Rozansky homology.  This also tells us that nothing is lost by considering equivariant homology since the non-equivariant unreduced homology can be obtained from the equivariant homology groups.

In the case where $s_n(K_1) = 0$, we can say more about the homology of the knot $K_p$.  In fact, the homology of $K_p$ is characterized entirely by $p$ and the homology of the knot $K_1$.  We state this first for the equivariant case.

\begin{theorem}
\label{knotsfromU}
We consider the situation of Figure \ref{crossdiag} where we take $K_0 = U$, the unknot, and assume that $s_n(K_1) = 0$.  Taking equivariant homology with potential $w= x^{n+1} - ax$, let $\Delta$ be the bigraded $\C[a]$-module isomorphic to the torsion part of $H(K_1)$.  Then for $p \geq 2$ we have

\[ H(K_p) = H(K_{p-1}) \oplus \Delta[2p]\{ 2n(1-p) \} {\rm .}\]

\end{theorem}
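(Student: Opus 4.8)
The plan is to extract $H(K_p)$ from $H(K_{p-1})$ by feeding $K_0 = U$ and the hypothesis $s_n(K_1)=0$ into the comparison maps of Theorems \ref{stab1} and \ref{stab2} and the exact sequences of Propositions \ref{usefulseq} and \ref{anotherseq}, and running an induction on $p\ge 2$. The first move is to dispose of the free part. By Theorem \ref{snunknot} we have $s_n(K_p)=s_n(K_1)=0$ for every $p\ge 1$, so the structural description of the equivariant homology recalled above gives
\[ H(K_p) = tor(H(K_p)) \oplus \bigoplus_{l=1}^{n}\C[a][0]\{2l-n-1\}, \]
with the free summand independent of $p$ (and living in homological degree $0$). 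Since $\C[a]$ is a PID the free/torsion splitting is canonical, so the theorem is equivalent to $tor(H(K_p)) \cong tor(H(K_{p-1})) \oplus \Delta[2p]\{2n(1-p)\}$, the base case $p=2$ being the instance in which $K_{p-1}=K_1$ and $tor(H(K_1))=\Delta$ by definition.

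For the inductive step I would use the directed system $F_{0,1},F_{0,2},\dots$, arranging $F_{i,j}$ to be a degree-preserving injection of complexes that is the composite of the one-step maps; then $\mathrm{Cone}(F_{0,p})$ carries a filtration whose successive quotients are quasi-isomorphic to $\mathrm{Cone}(F_{i-1,i})$, $1\le i\le p$. The key point is that $F_{0,p}$ is injective on homology with cokernel exactly $tor(H(K_p))$: this is where $K_0=U$ enters, since the (degree $0$) map must carry the free part $H(U)$ of $H(U)=H(K_0)$ isomorphically onto the free part of $H(K_p)$ — the two free parts agree because $s_n(K_p)=0$ — so $H(\mathrm{Cone}(F_{0,p}))=tor(H(K_p))$. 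The filtration step produces the short exact sequence of Proposition \ref{usefulseq},
\[ 0 \longrightarrow \mathrm{Cone}(F_{0,p-1}) \longrightarrow \mathrm{Cone}(F_{0,p}) \longrightarrow \mathrm{Cone}(F_{p-1,p}) \longrightarrow 0, \]
whose long exact sequence in homology relates $tor(H(K_{p-1}))$, $tor(H(K_p))$ and $H(\mathrm{Cone}(F_{p-1,p}))$. It remains to (i) identify $H(\mathrm{Cone}(F_{p-1,p}))$ with $\Delta[2p]\{2n(1-p)\}$ and (ii) show the connecting homomorphism vanishes, so that the long exact sequence degenerates to the asserted splitting.

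Step (i) should come out of the computations underlying Theorems \ref{stab1}--\ref{stab2}: the one-step map $F_{p-1,p}$ differs from $F_{0,1}$ only inside the twist region, where it concatenates one further full twist with the turnback tangle that dominates $\mathrm{Cone}(F_{0,1})$; tracing the Reidemeister simplifications this entails produces a fixed shift relative to the previous step, and since $H(\mathrm{Cone}(F_{0,1}))=tor(H(K_1))=\Delta$ this propagates to $H(\mathrm{Cone}(F_{p-1,p}))=\Delta[2p]\{2n(1-p)\}$, the coefficient $2n$ being the quantum degree of $a$. For step (ii), Theorem \ref{stab1} concentrates $H(\mathrm{Cone}(F_{p-1,p}))$ in homological degrees $\ge 2(p-1)-c_- -1$, while Theorem \ref{stab2}, applied to the companion system $G_{i,j}$ that meshes with $F_{i,j}$, realises $H(K_p)$ in this same range of degrees as a grading-shifted copy of $H(K_{p-1})$; the promised morphism of exact sequences comparing these two identifications, combined with the inductive hypothesis and the already-established behaviour on free parts, should force the connecting map to be zero.

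I expect the principal obstacle to be precisely step (ii) — the vanishing of the connecting homomorphism, equivalently the splitting of a torsion extension over $\C[a]$. Such an extension need not split in general, and the homological degrees occupied by the new summand $\Delta[2p]\{2n(1-p)\}$ and by the old torsion $tor(H(K_{p-1}))$ do in general overlap, so this has to be argued rather than seen; the natural route is the compatibility of the $F$- and $G$-directed systems noted above, each of which exhibits a range of $H(K_p)$ as a shift of a range of $H(K_{p-1})$, pinned down against the inductive description. By comparison, the remaining task — bookkeeping the homological shift $[2p]$ and the quantum shift $\{2n(1-p)\}$ from the explicit definitions of the $F_{i,j}$ in Section \ref{stabsection} — is routine once the geometric maps are in hand.
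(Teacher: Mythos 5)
Your overall architecture is the right one (the one-step cones are shifted copies of $Co(F_{0,1})$, whose homology is identified with $\Delta$ using $K_0=U$ and $s_n(K_1)=0$, and the long exact sequence of $F_{p-1,p}$ is what produces the extra summand), but the proof has a genuine gap exactly where you say you expect trouble: you never actually prove either the vanishing of the connecting homomorphisms or the splitting of the resulting extensions, and the mechanism you gesture at would not do it. Comparing ranges of homological degrees via Theorems \ref{stab1} and \ref{stab2} cannot force the connecting map to vanish (as you note, the degrees occupied by $\Delta[2p]\{2n(1-p)\}$ and by the torsion of $H(K_{p-1})$ overlap), and even after the long exact sequence breaks into short exact sequences one still has to split an extension of torsion $\C[a]$-modules, which is a separate problem from killing the connecting map (your ``equivalently'' conflates the two). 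The argument the paper runs, and which your sketch is missing, is the naturality built into Proposition \ref{usefulseq}: the morphism of long exact sequences (vertical maps induced by the $G$-system, with the \emph{identity} on $M$) identifies the connecting maps of the row-$p$ sequence with those of the row-$1$ sequence; in row $1$ these maps go into or out of $H(K_0)=H(U)$, which is free and concentrated in homological degree $0$, while $M$ is torsion, so they are zero, and hence by commutativity the row-$p$ connecting maps are zero as well. The splitting is then produced explicitly, not abstractly: the map $H^{i-2(p-1)}(K_1)\rightarrow M^{i-2p+2}$ in row $1$ is an isomorphism on the torsion part of $H(K_1)$, so composing its inverse with the vertical map $H(K_1)\rightarrow H(K_p)\{2n(p-1)\}$ (going around the commutative square) gives a section of $H^i(K_p)\{2n(p-1)\}\rightarrow M^{i-2p+2}$. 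Without these two steps your induction does not close.

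A smaller point: your justification that $F_{0,p}$ carries $H(U)$ isomorphically onto the free part of $H(K_p)$ (``the two free parts agree because $s_n(K_p)=0$'') is asserted rather than argued; a quantum-degree-preserving map between free graded modules of the same graded ranks can perfectly well fail to be an isomorphism (it could be zero). What rescues it is that the cone homology is torsion (this comes from Proposition \ref{usefulseq}, via the filtration by shifted copies of $Co(F_{0,1})$), so the map is injective with torsion cokernel, and then the equality of quantum gradings of the free parts (here $s_n=0$ enters, together with the fact that the grading gaps are smaller than the degree $2n$ of $a$) forces an isomorphism onto the free quotient. This is fixable and close to the paper's own reasoning for $p=1$, but as written it is another assertion standing in for the argument. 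Note also that the paper never needs injectivity of $F_{0,p}$ on homology for $p\geq 2$: it only identifies $M=\Delta$ from the first row and then works entirely with the ladder of exact sequences, which is a cleaner route than your filtration of $\mathrm{Cone}(F_{0,p})$.
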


It is almost possible to characterize completely the homology of $K_p$ in terms of $p$ and the homology of $K_{p-1}$ even if $s_n(K_1) \not=0$.  In fact, just knowing $H(K_{p-1})$ we would know $H(K_p)$ in all homological degrees apart from possibly one, and to determine $H(K_p)$ in this degree we would need one more piece of information.  We discuss what piece of information this is following the proof of Theorem \ref{knotsfromU}.  Armed with Theorem \ref{knotsfromU}, we can also consider the non-equivariant cases.

\begin{theorem}
\label{knotsfromU2}
Suppose we are in the set-up of Theorem \ref{knotsfromU} and let $H(K)$ stand for the standard unreduced or reduced Khovanov-Rozansky homology of $K$.  Let $\Delta$ be the bigraded $\C$-module satisfying

\[ H(K_1) = \C[0]\{0\} \oplus \Delta\]

\noindent for the reduced case and

\[ H(K_1) = \C[0]\{1-n\} \oplus \C[0]\{3-n\} \oplus \cdots \oplus \C[0]\{n-1\} \oplus \Delta \]

\noindent for the unreduced case.  Then for $p \geq 2$ we have

\[ H(K_p) = H(K_{p-1}) \oplus \Delta[2p]\{ 2n(1-p) \} {\rm .}\]
\end{theorem}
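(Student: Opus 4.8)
The plan is to deduce Theorem~\ref{knotsfromU2} from the equivariant Theorem~\ref{knotsfromU} via the universal coefficient relationship between equivariant and non-equivariant Khovanov--Rozansky homology noted in the discussion above: the non-equivariant unreduced (resp.\ reduced) complex $C(K)$ is obtained from the equivariant complex, whose homology we denote $H_{\C[a]}(K)$, by the base change along $\C[a]\to\C$, $a\mapsto 0$ (``setting $a=0$''). Since $\C[a]$ is a graded principal ideal domain, every freely generated graded complex of $\C[a]$-modules is homotopy equivalent to a direct sum of elementary pieces $0\to\C[a]\to0$ and $0\to\C[a]\xrightarrow{a^k}\C[a]\to0$, and $H_{\C[a]}(K)$ records exactly which pieces occur: a free summand $\C[a][h]\{q\}$ for each piece of the first type, and a torsion summand $\C[a]/(a^k)[h]\{q\}$ for each piece of the second type. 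Thus $H_{\C[a]}(K)$ determines $C(K)$ up to homotopy equivalence, hence determines $H(K)$; write $\nu$ for the resulting assignment $H_{\C[a]}(K)\mapsto H(K)$. On elementary summands it is $\nu(\C[a][h]\{q\})=\C[h]\{q\}$ and $\nu(\C[a]/(a^k)[h]\{q\})=\C[h]\{q\}\oplus\C[h-1]\{q+2nk\}$, the quantum degree $2n$ of $a$ entering through the free resolution used for the $\mathrm{Tor}$ term; in particular $\nu$ is additive over direct sums and commutes with both grading shifts.

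Next I would identify the module $\Delta$ of the statement. Applying $\nu$ to $H_{\C[a]}(K_1)=tor\oplus\bigoplus_{l=1}^{n}\C[a][0]\{2l-n-1+s_n(K_1)\}$ and using the hypothesis $s_n(K_1)=0$, the free part contributes exactly $\C[0]\{1-n\}\oplus\C[0]\{3-n\}\oplus\cdots\oplus\C[0]\{n-1\}$ in the unreduced case, and a single $\C[0]\{0\}$ in the reduced case (the equivariant reduced homology having, by the same structure theory, free part a single copy of $\C[a][0]\{s_n(K_1)\}$). Comparing with the normalisations in Theorem~\ref{knotsfromU2}, the bigraded $\C$-module $\Delta$ there is precisely $\nu(tor)$, where $tor$ is the torsion part of $H_{\C[a]}(K_1)$; equivalently $\Delta=\nu(\Delta_{\C[a]})$, where $\Delta_{\C[a]}$ is the bigraded $\C[a]$-module playing the role of ``$\Delta$'' in Theorem~\ref{knotsfromU}.

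Finally, Theorem~\ref{knotsfromU} gives, for $p\ge2$, the equality of bigraded $\C[a]$-modules $H_{\C[a]}(K_p)=H_{\C[a]}(K_{p-1})\oplus\Delta_{\C[a]}[2p]\{2n(1-p)\}$. Applying $\nu$ and using its additivity and shift-compatibility,
\[
H(K_p)=H(K_{p-1})\oplus\nu(\Delta_{\C[a]})[2p]\{2n(1-p)\}=H(K_{p-1})\oplus\Delta[2p]\{2n(1-p)\},
\]
which is the assertion, simultaneously in the unreduced and reduced versions.

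The step needing the most care is the universal coefficient bookkeeping for $\nu$: one must verify that an equivariant torsion class $\C[a]/(a^k)[h]\{q\}$ gives back exactly $\C[h]\{q\}\oplus\C[h-1]\{q+2nk\}$, with these precise homological degrees and quantum shift, since an off-by-one here would corrupt the whole formula; and one must confirm that the reduced equivariant homology has free part a single copy of $\C[a][0]\{s_n(K_1)\}$, so that the reduced and unreduced identifications of $\Delta$ with $\nu(\Delta_{\C[a]})$ hold at once. Everything else is formal once this dictionary is fixed.
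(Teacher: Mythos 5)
The unreduced half of your argument is correct, and it is a genuinely different route from the paper's. The paper proves Theorem \ref{knotsfromU2} by re-running the long exact sequence argument of Theorem \ref{knotsfromU} directly in the non-equivariant theory, where every short exact sequence splits because we are over $\C$; the single delicate input is that $F_{0,1}\colon H^0(K_0)\to H^0(K_1)$ is injective, which is obtained by setting $a=0$ in the equivariant complex. You instead base-change the conclusion of Theorem \ref{knotsfromU} along $\C[a]\to\C$, and your dictionary $\nu$ (a free summand $\C[a][h]\{q\}$ contributes $\C[h]\{q\}$, a torsion summand $\C[a]/(a^k)[h]\{q\}$ contributes $\C[h]\{q\}\oplus\C[h-1]\{q+2nk\}$) is the right bookkeeping; together with the paper's structure theorem and $s_n(K_1)=0$ it identifies $\Delta$ with $\nu$ of the torsion part of the equivariant $H(K_1)$, and additivity and shift-compatibility of $\nu$ then give the unreduced statement. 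This is a slick and legitimate alternative for that case.

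The reduced case, however, has a genuine gap. Theorem \ref{knotsfromU} and the structure theorem $H(K)=tor\oplus\bigoplus_{l=1}^n\C[a][0]\{2l-n-1+s_n(K)\}$ are statements about the \emph{unreduced} equivariant theory; the paper's three flavors are reduced, unreduced, and (unreduced) equivariant. Your argument needs a reduced equivariant theory whose complex is free over $\C[a]$, which specializes at $a=0$ to the reduced complex, whose homology has free part a single $\C[a][0]\{s_n(K)\}$, and for which the analogue of Theorem \ref{knotsfromU} holds. None of this is in the paper, and it is not mere bookkeeping: with potential $x^{n+1}-ax$ the natural reduced construction imposes $x=0$ at the basepoint in a complex of modules over $\C[a][x]/((n+1)x^n-a)$, and since that quotient ring modulo $x$ is $\C[a]/(a)$, the basepoint relation forces $a$ to act as zero. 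For the unknot this already makes the ``reduced equivariant'' homology pure $a$-torsion, so it has no free summand $\C[a]\{s_n\}$ and your formula $H(K)=\nu(H_{\C[a]}(K))$ fails there; consequently you cannot quote Theorem \ref{knotsfromU} for a reduced equivariant theory. The paper avoids this entirely: for the reduced case it runs the exact-sequence argument in reduced non-equivariant homology (splitting is automatic over $\C$), needing only injectivity of $F_{0,1}$ on $H^0$, which it transfers from the unreduced case by a universal coefficients argument. To repair your proposal you should either follow that route for the reduced flavor or construct and justify a reduced equivariant theory with all the properties you are assuming.
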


By relating Khovanov homology with their own instanton knot Floer homology, Kronheimer and Mrowka have shown that Khovanov homology detects the unknot \cite{KrMr}.  It is still an open question whether the Jones polynomial, which is the graded Euler characteristic of Khovanov homology, detects the unknot.  However, it is known that the Jones polynomial (and likewise the HOMFLY polynomial) does not enjoy the stronger property of being a complete invariant able to distinguish between any pair of knots.  For example, the HOMFLY polynomial is unable to distinguish between mutant knots.

It has been verified by Mackaay and Vaz \cite{MV} that the mutant knot pair consisting of the Kinoshita-Terasaka and the Conway knots have isomorphic reduced Khovanov-Rozansky homologies and hence also isomorphic reduced HOMFLY homologies.  Furthermore, there exist families of distinct 2-bridge knots with the same HOMFLY polynomials.  Since 2-bridge knots have thin homology, these knots must also share isomorphic reduced Khovanov-Rozansky homologies.

With Theorem \ref{knotsfromU} on hand we can give a new method for producing families of knots with isomorphic Khovanov-Rozansky homologies.  The next theorem follows as a consequence.

\begin{theorem}
\label{noncomplete}
Given a natural number $m$, there are $m$ distinct prime knots with bridge number greater than 2, which have isomorphic $sl(n)$ Khovanov-Rozansky homologies for all $n$.
\end{theorem}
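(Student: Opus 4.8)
The plan is to combine Theorems \ref{knotsfromU} and \ref{knotsfromU2} with a supply of distinct unknotting-number-one knots sharing the same Khovanov--Rozansky homology. Suppose $J_1,\dots,J_m$ are pairwise distinct knots, each with unknotting number one, with $s_n(J_i)=0$ for every $n$, and such that the reduced, unreduced, and equivariant (potential $w=x^{n+1}-ax$) $sl(n)$ Khovanov--Rozansky homologies of $J_1,\dots,J_m$ are mutually isomorphic for every $n$. A crossing change can always be realised by replacing a clasp $T_{\pm 1}$ between two anti-parallel strands by the trivial tangle $T_0$, so the hypothesis $u(J_i)=1$ lets us present each $J_i$ as the knot $K_1$ of Figure \ref{crossdiag} for a diagram with $K_0=U$ --- after replacing $J_i$ by its mirror, or invoking the dual set-up of the Remark, should the relevant clasp be negative. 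Running the twisting construction on these diagrams produces, for each $p\ge 1$, knots $K_p^{(1)},\dots,K_p^{(m)}$, and Theorems \ref{knotsfromU} and \ref{knotsfromU2} tell us that $H(K_p^{(i)})$ --- in each of the three flavours and for each $n$ --- is determined by $p$ together with $H(J_i)=H(K_1^{(i)})$. Since the $H(J_i)$ coincide, the knots $K_p^{(1)},\dots,K_p^{(m)}$ have isomorphic $sl(n)$ Khovanov--Rozansky homologies for every $n$, for each fixed $p$.

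For the input knots I would use $2$-bridge knots. As recalled in the introduction, for any $m$ there are pairwise distinct $2$-bridge knots with a common HOMFLY polynomial, and one can in addition take them to have unknotting number one and vanishing signature. Being alternating, such knots are $sl(n)$-thin for all $n$, so the shared HOMFLY polynomial forces their reduced $sl(n)$ homologies to agree for all $n$; for a thin knot the reduced homology together with its polynomial invariants determines the unreduced and the equivariant homologies; and for alternating knots the vanishing of the signature is equivalent to the vanishing of $s_n$ for all $n$. Hence the $J_i$ meet all the requirements of the previous paragraph.

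It remains to show that for $p$ large the knots $K_p^{(1)},\dots,K_p^{(m)}$ are prime, pairwise distinct, and of bridge number greater than $2$, and this purely $3$-dimensional step --- the heart of the matter --- is carried out by appealing to \cite{Kaw} together with standard Dehn-surgery arguments. Let $c_i\subset S^3$ be the crossing circle encircling the two twisted strands, so that $K_p^{(i)}$ arises from $J_i$ by $\pm 1/(p-1)$-surgery on $c_i$ and $S^3\setminus K_p^{(i)}$ is a Dehn filling of the fixed link exterior $S^3\setminus(J_i\cup c_i)$, which one arranges (in choosing the $J_i$ and the unknotting crossings) to be hyperbolic. For $p\gg 0$, hyperbolic Dehn surgery then makes $K_p^{(i)}$ hyperbolic, hence prime, and its geometry recovers the cusped manifold $S^3\setminus(J_i\cup c_i)$ together with the meridian of $c_i$ up to finitely many ambiguities; so $K_p^{(i)}\cong K_p^{(j)}$ for infinitely many $p$ would force a homeomorphism $S^3\setminus(J_i\cup c_i)\cong S^3\setminus(J_j\cup c_j)$ matching the $c$-cusps and their meridians, hence $J_i\cong J_j$, contradicting distinctness. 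For the bridge number, pass to double branched covers: $\Sigma_2(K_p^{(i)})$ is obtained by Dehn surgery on a fixed link in the lens space $\Sigma_2(J_i)$, and by the cyclic surgery theorem (or, again, by hyperbolic Dehn surgery) this is a lens space --- and so $K_p^{(i)}$ is $2$-bridge --- for only finitely many $p$. Choosing any sufficiently large $p$ completes the proof; the obstacle is entirely in this last step, since the homological content is packaged in Theorem \ref{knotsfromU}, so what takes work is the $3$-manifold bookkeeping: securing the hyperbolicity needed and checking that distinctness, primeness, and large bridge number all survive sufficient twisting.
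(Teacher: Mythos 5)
Your homological skeleton is the same as the paper's (propagate a coincidence of homologies through Theorem \ref{knotsfromU}, then use hyperbolic Dehn surgery, Theorem \ref{HDS}, to get distinctness, primeness and bridge number $>2$ for large twisting parameter), but the input you feed into it is not justified, and it is precisely there that the real content lies. You posit, for every $m$, a family of $m$ distinct $2$-bridge knots with unknotting number one, vanishing signature, and a common HOMFLY polynomial; no such families are exhibited or cited, and the simultaneous constraints (equal HOMFLY, $u=1$, $\sigma=0$, arbitrarily large $m$, and moreover unknotting crossings all of the same sign so that every $J_i$ fits the \emph{same} positive-clasp set-up of Figure \ref{crossdiag} -- mirroring one $J_i$ but not the others changes its homology) make this an open-ended arithmetic problem, not a known fact. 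Your step ``$\sigma=0$ is equivalent to $s_n=0$ for all $n$ for alternating knots'' is likewise asserted without proof and is not available from the references used here; the paper sidesteps this entirely by arranging that the relevant knot $K^*$ is smoothly slice, so that $s_n(K^*)=0$ because $s_n$ is a concordance homomorphism.

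The second gap is the $3$-dimensional one you explicitly defer. Writing ``which one arranges (in choosing the $J_i$ and the unknotting crossings) to be hyperbolic'' and ``standard Dehn-surgery arguments'' is exactly the part that needs a construction: with $2$-bridge input a badly chosen unknotting site lies in a twist region of the rational tangle, and then \emph{every} $K_p^{(i)}$ is again $2$-bridge, the relevant link exterior is not hyperbolic, and both the distinctness and the bridge-number arguments collapse; for the cyclic surgery/branched double cover step you also need the double branched cover of the tangle exterior to be hyperbolic (or at least irreducible and non-Seifert-fibered), which again does not come for free. The paper's proof is organized so that all of these hypotheses are supplied at once by Kawauchi's imitation theorem (Theorem \ref{akio}: distinct links $U\cup U_i$, a single slice knot $K^*$ produced by $+1$-surgery on each $U_i$, and hyperbolicity of the tangle complements and of their branched double covers) together with Soma's lemma \cite{Soma}; note also that this removes the need for $m$ distinct input knots with identical homologies altogether -- one slice knot with $m$ inequivalent unknotting circles suffices, since Theorem \ref{knotsfromU} only depends on $H(K_1)$ and $p$. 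Your citation of \cite{Kaw} does not actually plug the hole, because your construction never uses Kawauchi's link. As written, the proposal reduces the theorem to two unproved inputs rather than proving it.
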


We note that Theorem \ref{noncomplete} holds for reduced, unreduced, and equivariant homology with potential $w = x^{n+1} + ax$.  The knots undistinguished by these flavors of Khovanov-Rozansky homology that we produce are not necessarily thin nor necessarily related by mutation.  For an example of two knots with isomorphic Khovanov-Rozansky homologies, see Figure \ref{megaexample} and the discussion in Subsection \ref{examplesandthat}.

It remains a motivating question whether topological conclusions may be drawn from the coincidence of Khovanov-Rozansky homologies.  Further consequences of Theorem \ref{noncomplete} and its proof are discussed in Subsections \ref{examplesandthat} and \ref{mutantpairs}, where we give specific examples of interesting phenomena including an infinite number of mutant knot pairs with isomorphic reduced homologies.

\section{Algebraic structure results}
\label{stabsection}
In this section we shall prove Theorems \ref{stab1} and \ref{stab2} and derive further results enabling us to prove our more topological theorems.

\subsection{Stabilization}
To simplify notation we shall write $V$ and $Z$ (\emph{v}ertical and hori\emph{z}ontal) for the matrix factorizations indicated in Figure \ref{VandZ}.

\begin{figure}
\centerline{
{
\psfrag{V}{$V$}
\psfrag{Z}{$Z$}
\psfrag{x1}{$x_1$}
\psfrag{x2}{$x_2$}
\psfrag{x3}{$x_3$}
\psfrag{x4}{$x_4$}
\psfrag{T+(D)}{$T^+(D)$}
\includegraphics[height=1.5in,width=3.5in]{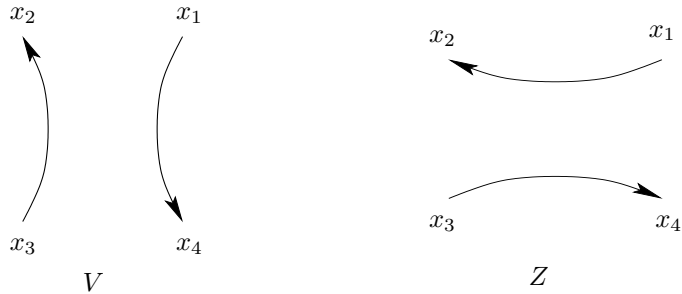}
}}
\caption{We draw here the matrix factorizations $V$ and $Z$.  In the text of this paper, $V$ and $Z$ often appear with integers appended in curly and/or square parentheses to indicate quantum degree shift and homological degree respectively.}
\label{VandZ}
\end{figure}

In \cite{Kras1}, Krasner gave a compact description of the complex $T_k$ of our Definition \ref{Tkdefn}.  As a consequence of this description, one can see that knot diagrams built up from these tangle building blocks have associated chain complexes which avoid the ``thick-edged'' matrix factorization, and hence much of the complication usually involved in the Khovanov-Rozansky chain complex.  Understanding such \emph{Krasner knots} may well be a good way to begin getting a grasp on Khovanov-Rozansky homology.

This compact description of $T_k$ will essentially be our main ingredient.  In the theorem that follows we use curly or square parentheses to indicate shift in the quantum degree and homological degree respectively and $w$ is the \emph{potential}.  We state Krasner's theorem both for the standard potential $w = x^{n+1}$ and for the equivariant potential $w = x^{n+1} - ax$, although Krasner only stated it for the standard potential.  Since the results that go into the proof of Krasner's theorem have now been established in the general equivariant setting \cite{Kras2}, we can state the result in more generality.

\begin{theorem}[Krasner \cite{Kras1}]
\label{input}
Up to chain homotopy equivalence, the complex $T_k$ is isomorphic to the following chain complex of matrix factorizations:

\begin{eqnarray*}
V[0] \{ 1-n \} &\stackrel{x_2 - x_4}{\longrightarrow}& V[1]\{-1-n\} \stackrel{A}{\longrightarrow} V[2]\{1-3n\}  \stackrel{x_2 - x_4}{\longrightarrow} \cdots \\
\cdots &\stackrel{x_2 - x_4}{\longrightarrow}& V[2k-1]\{(1-2k)n - 1 \}  \stackrel{S}{\longrightarrow}Z[2k]\{-2kn\} {\rm ,}
\end{eqnarray*}

\noindent where we write

\[ A =  x_2^{n-1} + x_2^{n-2}x_4 + x_2^{n-3}x_4^2 + \cdots + x_4^{n-1} \]

\noindent and we write $S$ for the map induced by the saddle cobordism.
\end{theorem}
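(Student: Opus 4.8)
The plan is to recognise the statement as Krasner's theorem \cite{Kras1}, proved there for the standard potential $w = x^{n+1}$, and to organise the argument as an induction on $k \geq 1$ using the homotopy equivalence $T_k \simeq T_{k-1} \otimes T_1$ from the Proposition above, with the equivariant refinement ($w = x^{n+1} - ax$) obtained by checking that every matrix-factorisation lemma invoked is available over $\C[a]$. The two workhorses are: Gaussian elimination for complexes of matrix factorisations (a contractible two-term subcomplex $\Gamma \stackrel{\cong}{\to} \Gamma'$ may be deleted at the cost of a correction to the remaining differentials), and the MOY-type direct sum decompositions of compositions of wide-edge (``thick'') matrix factorisations. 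Both of these hold over $\C[a]$ by the work in \cite{Kras2}, which is precisely why Krasner's computation extends to the equivariant setting without new ideas.

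For the base case $k=1$ I would write out the $sl(n)$ Khovanov--Rozansky complex of each of the two positive crossings in $T_1$ --- a two-term complex consisting of the oriented resolution and the wide-edge resolution with the standard quantum shifts --- take the tensor product over the four boundary points, and run Gaussian elimination. The summands that contain a wide edge get cancelled against their neighbours via the MOY relations, leaving exactly $V[0]\{1-n\} \stackrel{x_2 - x_4}{\longrightarrow} V[1]\{-1-n\} \stackrel{S}{\longrightarrow} Z[2]\{-2n\}$; the identification of the surviving arrows as multiplication by $x_2 - x_4$ and as the saddle map $S$ comes from the explicit formulas for the morphisms attached to merging and splitting a wide edge, and the quantum shifts are read off by bookkeeping through the two crossings.

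For the inductive step I would take $T_{k-1}$ in the asserted form, tensor its tail $\cdots \stackrel{x_2-x_4}{\longrightarrow} V[2k-3]\{\ldots\} \stackrel{S}{\longrightarrow} Z[2k-2]\{\ldots\}$ with the two new crossings of $T_1$, and simplify exactly as in the base case: the two crossings adjacent to the cup--cap $Z$ collapse so that the last map $S$ is replaced by $A$ and a new zig-zag $\stackrel{A}{\longrightarrow} V[2k-2]\{\ldots\} \stackrel{x_2-x_4}{\longrightarrow} V[2k-1]\{\ldots\} \stackrel{S}{\longrightarrow} Z[2k]\{\ldots\}$ is grafted on, every other term of $T_{k-1}$ being carried along unchanged; propagating the quantum shifts through the two extra crossings reproduces the shifts in the statement. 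The appearance of $A = (x_2^n - x_4^n)/(x_2 - x_4)$ as the alternate differential is forced by the factorisation identity $(x_2 - x_4)\cdot A = x_2^n - x_4^n$ built into the wide-edge factorisation, which is what makes each composite of consecutive differentials chain-homotopic to zero.

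The main obstacle is not conceptual but bookkeeping: through the cascade of Gaussian eliminations one must track which wide-edge summands cancel against which, the sign and polynomial corrections introduced at each cancellation, and the accumulated homological and quantum shifts, and then verify that the differential landing in each $V$ really is $x_2 - x_4$ or $A$ on the nose rather than only up to an automorphism. Granting Krasner's computation for $w = x^{n+1}$, the only extra point in the equivariant case is that Gaussian elimination, the decompositions of compositions of wide edges, and invariance under Reidemeister moves all hold for matrix factorisations with potential $w = x^{n+1} - ax$ over $\C[a]$ --- the content of \cite{Kras2} --- so the same sequence of reductions goes through verbatim.
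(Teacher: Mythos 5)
Your proposal is correct and matches how the paper treats this statement: the paper offers no proof of its own, simply importing the computation from Krasner \cite{Kras1} and remarking that the equivariant case ($w = x^{n+1} - ax$) follows because all ingredients of Krasner's argument are established over $\C[a]$ in \cite{Kras2}. Your inductive sketch (tensoring with one full twist at a time and simplifying via Gaussian elimination and the MOY-type decompositions) is essentially Krasner's own argument, so there is nothing here that diverges from the paper's route.
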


\begin{definition}
\label{2defo}
Setting $k=\infty$ in Theorem \ref{input} gives us a definition of a complex $T_\infty$.
\end{definition}

With Krasner's characterization, it is a quick matter to define the chain maps $F_{i,j}$ and $G_{i,j}$ of Theorems \ref{stab1} and \ref{stab2}.

\begin{definition}
Let $0 \leq i < j$.  Using the description of Theorem \ref{input} of the complexes $T_k$, we define two maps 

\[ F_{i,j} : T_i \rightarrow T_j {\rm ,}\]

\[ G_{i,j} : T_i \rightarrow T_j [2(i-j)]\{2n(j-i)\} \]

\noindent as follows.  We require that $F_{i,j}$ preserves the homological grading and is the identity map on the the matrix factorizations in all homological degrees less than $2i$.  To the component of $F_{i,j}$ in homological degree $2i$ we assign the map $S' = (-1/n+1)S$ where $S$ is the map of matrix factorizations associated to the saddle cobordism.  To check that $F_{i,j}$ is a chain map, it is enough to observe that

\[ S^2 = -(n+1)A \, \, {\rm and} \, \, (x_2 - x_4)\circ S = 0{\rm .}\]

\noindent The former of these identities is computed in detail in Appendix A of \cite{KRS2}.  For the latter note that up to homotopy we have

\begin{eqnarray*}
(x_2 - x_4)\circ S &=& x_2 \circ S - x_4\circ S \\
&=& x_2 \circ S - S \circ x_4 = x_2 \circ S - S \circ x_1 \\
&=&  x_2 \circ S - x_1 \circ S = x_2 \circ S - x_2 \circ S \\
&=& 0 \rm{.}
\end{eqnarray*}

\noindent Clearly $F_{i,j}$ preserves the quantum grading.

We require that $G_{i,j}$ is the identity map on all homological degrees of $T_i$ which are non-zero matrix factorizations.  Certainly then $G_{i,j}$ is a chain map and we see that it is quantum graded of degree $0$.
\end{definition}

With these definitions in hand, the path to proving Theorems \ref{stab1} and \ref{stab2} is straightforward: in brief, we compute the cones of the maps $F_{i,j}$ and $G_{i,j}$ and show that the homology of the cones is supported well away from certain degrees in which $F_{i,j}$ and $G_{i,j}$ must therefore induce isomorphisms.

In the following propositions, we leave out quantum grading shifts and only give the leftmost and rightmost homological gradings.  We do this in order to try and give an uncluttered exposition; for the reader who is making use of these propositions, we recommend having a copy of Krasner's \cite{Kras1} to hand.

\begin{proposition}
\label{coneF}
Writing $Co(F_{i,j})$ for the cone of $F_{i,j}$ we have

\[ Co(F_{i,j}) = Z[2i-1] \stackrel{S'}{\rightarrow} V \stackrel{x_2 - x_4}{\rightarrow} V \stackrel{A}{\rightarrow} V \stackrel{x_2 - x_4}{\rightarrow} V  \stackrel{A}{\rightarrow} V \cdots \stackrel{x_2 - x_4}{\rightarrow} V  \stackrel{S}{\longrightarrow} Z[2j] {\rm .} \]
\end{proposition}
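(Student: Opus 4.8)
The plan is to compute $Co(F_{i,j})$ directly from the definition of the mapping cone, using Krasner's explicit model (Theorem \ref{input}) for both $T_i$ and $T_j$ together with the explicit description of $F_{i,j}$. Recall that the cone of a chain map $f : C \to D$ is the complex $C[1] \oplus D$ with differential built from $d_C$, $d_D$, and $f$. Here $C = T_i$ and $D = T_j$, both presented as the Krasner zig-zag of copies of $V$ terminating in a single $Z$.

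First I would write out $T_i[1]$ and $T_j$ side by side in their Krasner form. Since $F_{i,j}$ is the identity on every matrix factorization in homological degrees $< 2i$, in those degrees the cone differential pairs up a $V$ in $T_i[1]$ with the corresponding $V$ in $T_j$ via an identity component; standard homological algebra (Gaussian elimination / the fact that a cone on an identity is acyclic) lets us cancel all of these paired terms. What survives from $T_i[1]$ is therefore only its top term, namely $Z[2i]$ shifted once more to land in homological degree $2i - 1$, i.e. $Z[2i-1]$; and what survives from $T_j$ is the tail of the Krasner complex from homological degree $2i$ onward, which is the string $V \xrightarrow{x_2 - x_4} V \xrightarrow{A} V \xrightarrow{x_2-x_4} \cdots \xrightarrow{x_2-x_4} V \xrightarrow{S} Z[2j]$. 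The one remaining piece of data is the connecting differential out of the surviving $Z[2i-1]$: this comes from the homological-degree-$2i$ component of $F_{i,j}$, which by definition is $S' = (-1/(n+1))S$. Hence the leftmost map in the cone is the arrow $Z[2i-1] \xrightarrow{S'} V$, and assembling everything gives exactly the claimed complex.

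I expect the main obstacle to be bookkeeping rather than conceptual: one must be careful that the Gaussian elimination of the identity-paired $V$'s in low homological degree does not introduce spurious "zig-zag" correction terms into the surviving differentials. Because $F_{i,j}$ is literally the identity on those factorizations (not merely a quasi-isomorphism), and because the only component of $F_{i,j}$ touching the surviving part is the single map $S'$ in degree $2i$, the elimination is clean and no corrections appear — but this should be checked, and it is where I would slow down. A secondary (purely clerical) point is to track the homological-degree shifts correctly so that the cancellations line up and the endpoints $Z[2i-1]$ and $Z[2j]$ come out as stated; here the quantum-degree shifts are suppressed by the stated convention, so only the homological indices need care. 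The identities $S^2 = -(n+1)A$ and $(x_2 - x_4)\circ S = 0$, already recorded in the construction of $F_{i,j}$, guarantee that the displayed sequence really is a complex, so no further verification is needed once the terms and maps are identified.
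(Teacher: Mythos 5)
Your proposal is correct and is essentially the paper's own argument: the paper also computes $Co(F_{i,j})$ by Gaussian elimination, cancelling the identity components of $F_{i,j}$ starting from the leftmost homological degree, which is exactly why no zig-zag correction terms arise (each cancelled target has no surviving incoming arrows), leaving $Z[2i-1]\stackrel{S'}{\rightarrow}$ the tail of $T_j$ as you describe.
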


\begin{proposition}
\label{coneG}
Writing $Co(G_{i,j})$ for the cone of $G_{i,j}$ we have

\[ Co(G_{i,j}) = V[2(i-j)]  \stackrel{x_2 - x_4}{\rightarrow} V \stackrel{A}{\rightarrow} V \stackrel{x_2 - x_4}{\rightarrow} V  \stackrel{A}{\rightarrow} V \cdots \stackrel{x_2 - x_4}{\rightarrow} V[-1] {\rm .} \]
\end{proposition}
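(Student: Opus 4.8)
The plan is to identify $Co(G_{i,j})$, up to homotopy equivalence, with the cokernel of $G_{i,j}$, and then to read that cokernel directly off Krasner's description (Theorem~\ref{input}). First I would recall that $Co(G_{i,j})$ is by construction the complex $T_i[1] \oplus \bigl(T_j[2(i-j)]\{2n(j-i)\}\bigr)$ equipped with the usual mapping-cone differential whose off-diagonal component is the chain map $G_{i,j}$. The crucial preliminary step is a degree bookkeeping check: since $T_j$ has nonzero matrix factorizations in homological degrees $0,1,\dots,2j$, the shifted complex $T_j[2(i-j)]\{2n(j-i)\}$ has nonzero matrix factorizations in homological degrees $2(i-j), 2(i-j)+1, \dots, 2i$, and comparing with $T_i$ term by term one checks that its restriction to homological degrees $0,1,\dots,2i$ coincides with $T_i$ exactly: the same matrix factorizations, the same quantum gradings (this is what the shift $\{2n(j-i)\}$ is arranged to achieve --- in particular it turns the degree-$2j$ term $Z\{-2jn\}$ of $T_j$ into the degree-$2i$ term $Z\{-2in\}$ of $T_i$), the same internal maps (the shift is by an even homological amount, so parities match and the pattern ``$x_2-x_4$, then $A$'' agrees), and the same terminal saddle map $S$. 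By the definition of $G_{i,j}$ the map is then literally the identity of $T_i$ onto this ``top part'', so it is a degreewise split injection onto a direct summand of $T_j[2(i-j)]\{2n(j-i)\}$.

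Next I would invoke the standard fact that the mapping cone of a degreewise split injection of complexes is homotopy equivalent to its cokernel; equivalently one may just Gaussian-eliminate the matched identity pairs, one in each homological degree $0,\dots,2i$, observing that no zig-zag corrections between surviving terms are produced, since the only arrows leaving a cancelled $T_i$-summand land in other cancelled $T_i$-summands. Either way $Co(G_{i,j})$ is homotopy equivalent to the quotient of $T_j[2(i-j)]\{2n(j-i)\}$ by that top part, namely its restriction to homological degrees $2(i-j),\dots,-1$. Unwinding, this restriction is the string of the $2(j-i)$ terms of $T_j$ lying in homological degrees $0,1,\dots,2(j-i)-1$ --- every one of them the matrix factorization $V$, none of them the terminal $Z$ --- carrying the alternating maps $x_2-x_4, A, x_2-x_4, \dots$; the first arrow, out of the degree-$0$ term, is $x_2-x_4$, and the last arrow, into the odd-degree term $2(j-i)-1$, is again $x_2-x_4$. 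After the overall homological shift this is precisely
\[ V[2(i-j)] \stackrel{x_2 - x_4}{\longrightarrow} V \stackrel{A}{\longrightarrow} V \stackrel{x_2 - x_4}{\longrightarrow} \cdots \stackrel{x_2 - x_4}{\longrightarrow} V[-1], \]
with the quantum shifts suppressed as in the statement.

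The main obstacle is precisely the degree bookkeeping of the first paragraph: one must get right, simultaneously, that the homological shift $[2(i-j)]$ aligns the top of $T_j$ with the top of $T_i$, that the quantum shift $\{2n(j-i)\}$ makes the corresponding quantum degrees agree term by term, and that parities match so that the internal maps of the two complexes are identified correctly. Everything after that is the routine simplification of complexes of matrix factorizations used throughout Khovanov--Rozansky theory. I note that the companion Proposition~\ref{coneF} would be handled in exactly the same way, the only difference being that $F_{i,j}$ is the identity only in homological degrees below $2i$ and equals a nonzero multiple of the saddle map in degree $2i$, so that the cancellation leaves the two terminal $Z$-terms $Z[2i-1]$ and $Z[2j]$ in place, each adjacent to a saddle map.
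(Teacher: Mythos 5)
Your proposal is correct and is essentially the paper's argument: the paper simply cancels, by Gaussian elimination, all the identity components of $G_{i,j}$ (starting from the rightmost homological degree), which is exactly your observation that $G_{i,j}$ is a degreewise split injection onto the top part of $T_j[2(i-j)]\{2n(j-i)\}$, so the cone is homotopy equivalent to the remaining bottom string of $V$'s. Your explicit check that the shifts $[2(i-j)]$ and $\{2n(j-i)\}$ align the top of the shifted $T_j$ with $T_i$ term by term, and that no zig-zag corrections hit the surviving terms, just spells out what the paper leaves implicit.
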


\begin{proof}[Proof of Propositions \ref{coneF} and \ref{coneG}]
This is a straightforward application of Gaussian elimination.  Starting from the leftmost homological degree in the case of $F_{i,j}$ and the rightmost in the case of $G_{i,j}$, we cancel all the identity maps of chain factorizations appearing as components of the chain maps.
\end{proof}

With our precise knowledge of the cones $Co$ of the chain maps $F_{i,j}$ and $G_{i,j}$, it is straightforward to prove our stabilization Theorems \ref{stab1} and \ref{stab2}.

\begin{proof}[Proof of Theorems \ref{stab1} and \ref{stab2}]
There is a short exact of chain complexes

\[ 0 \rightarrow C(D_i) \stackrel{F_{i,j}}{\rightarrow} C(D_j) \rightarrow Co(F_{i,j}) \rightarrow 0 \]

\noindent in which each map is graded of homological and quantum degree $0$.  This is clear in the unreduced and equivariant settings, and indeed holds also in the reduced setting since the map of rings $\C[x]/x^n \rightarrow \C$ is flat.

Induced by this short exact sequence is a long exact sequence of homology groups.  Proposition \ref{coneF} tells us that we must have

\[ H^k (Co(F_{i,j})) = 0 \]

\noindent for $k \leq 2i - c_- - 2$, so that the long exact sequence consists of isomorphisms $F_{i,j}$ in homological degrees $\leq 2i - c_- - 2$.  This proves Proposition \ref{stab1}.

The proof of Theorem \ref{stab2} follows the same argument.
\end{proof}

Informally speaking, Theorem \ref{stab1} tells us that we can generalize the class of objects for which there exists Khovanov-Rozansky homology to include \emph{knots with infinite twist regions}, as discussed in the preamble to the statement of the theorem.  More formally we could consider knot diagrams with extra singularities allowed.  The concept is outlined in Figure \ref{inftwist}.  Investigating these stable homologies is an interesting project, but we shall not pursue it further in this paper.

\begin{figure}
\centerline{
{
\psfrag{+inf}{$+\infty$}
\psfrag{Z}{$Z$}
\psfrag{x1}{$x_1$}
\psfrag{x2}{$x_2$}
\psfrag{x3}{$x_3$}
\psfrag{x4}{$x_4$}
\psfrag{T+(D)}{$T^+(D)$}
\includegraphics[height=1.5in,width=2in]{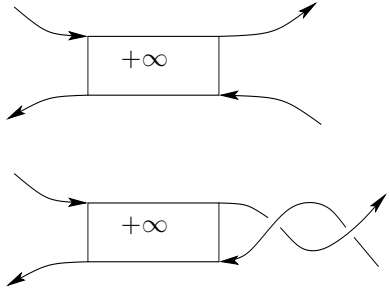}
}}
\caption{We show an example of part of a knot diagram where we have allowed an extra type of singularity corresponding to an infinitely positively twisted pair of strands.  From the results on stabilization in this paper it follows that such enhanced diagrams have well-defined homology groups.  We also give an example of a new Reidemeister-type move for such diagrams: the infinitely twisted region donates a positive twist to the rest of the diagram.  Clearly the homology groups will not change under this move.  Further moves are possible of course, and we encourage the reader to investigate.}
\label{inftwist}
\end{figure}

\subsection{Some exact sequences}

We derive some exact sequences of homology groups to use in proving the structural and topological theorems of Section \ref{topsection}.

\begin{proposition}
\label{usefulseq}
Let knots $K_0, K_1, K_2, \ldots$ be given as in Figure \ref{crossdiag}.  Then there is a commutative diagram in which the rows are exact, which has the following form:

\[\minCDarrowwidth15pt\begin{CD}
@>>> M^{-1} @>>> H^0(K_0) @>>> H^0(K_1) @>>> M^0 @>>>H^1(K_0) @>>> \\
 @. @V id VV @VVV @VVV @V id VV @VVV\\
 @>>> M^{-1} @>>> H^2(K_1)\{2n\} @>>> H^2(K_2)\{2n\} @>>> M^0 @>>>H^3(K_1)\{2n\} @>>> \\
 @. @V id VV @VVV @VVV @V id VV @VVV\\
@>>> M^{-1} @>>> H^4(K_2)\{4n\} @>>> H^4(K_3)\{4n\} @>>> M^0 @>>>H^5(K_2)\{4n\} @>>> \\
 @. @V id VV @VVV @VVV @V id VV @VVV
\end{CD}\]

\noindent Here $M$ is a bigraded, finitely-generated module (over $\C$ or $\C[a]$ depending on the variant of homology chosen).  Moreover, in the equivariant case, $M$ is a torsion $\C[a]$-module.  All maps preserve the quantum grading.
\end{proposition}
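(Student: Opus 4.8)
The plan is to produce the commutative diagram by iterating a single short exact sequence of chain complexes and its self-map. The basic building block comes from the cone computation of Proposition \ref{coneF} applied to the case $i = 0$, $j = 1$. From the short exact sequence $0 \to C(D_0) \xrightarrow{F_{0,1}} C(D_1) \to Co(F_{0,1}) \to 0$ we obtain a long exact sequence in homology, and Proposition \ref{coneF} identifies $Co(F_{0,1})$ explicitly as a short complex $Z[2i-1] \xrightarrow{S'} V \xrightarrow{x_2 - x_4} V \xrightarrow{S} Z[2j]$ with $i=0$, $j=1$ (tensored with the rest of $D$); call its homology $M^\bullet$ with appropriate quantum shift. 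Since this cone complex has only finitely many terms, each a matrix factorization that contributes finitely-generated homology (over $\C$ or $\C[a]$), $M$ is finitely generated; and in the equivariant case the complex has no $\C[a]$-free summands surviving after setting $a=1$ (one checks that $Co(F_{0,1})$ is acyclic over $\C$, equivalently that $F_{0,1}$ is a quasi-isomorphism in Gornik's theory because both knots have the same $s_n$ by Theorem \ref{changesn}-type reasoning), forcing $M$ to be torsion. This gives the first row of the diagram.

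Next I would observe that exactly the same cone complex, up to an overall shift by $[2]\{2n\}$, computes $Co(F_{p-1,p})$ for every $p \ge 1$: indeed Proposition \ref{coneF} shows $Co(F_{i,j})$ depends on $i,j$ only through the positions of its endpoints, and for $(i,j) = (p-1,p)$ the cone is the length-four complex $Z[2p-3]\xrightarrow{S'}V\xrightarrow{x_2-x_4}V\xrightarrow{S}Z[2p]$, which is the $(p-1)$-fold shift by $[2]\{2n\}$ — using that each full twist shifts homological degree by $2$ and quantum degree by $2n$, as recorded in Krasner's Theorem \ref{input}. Hence $H^\bullet(Co(F_{p-1,p})) = M^\bullet[2(p-1)]\{2n(p-1)\}$, which accounts for the shifts $\{2n\}, \{4n\}, \dots$ appearing in successive rows and for the homological degree pattern $(0,0,1)$, $(2,2,3)$, $(4,4,5)$, etc. So the $p$-th row of the diagram is just the long exact sequence coming from $0 \to C(D_{p-1}) \xrightarrow{F_{p-1,p}} C(D_p) \to Co(F_{p-1,p}) \to 0$.

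To get the vertical maps and commutativity I would use that the maps $F_{i,j}$ compose: $F_{p-1,p+1} = F_{p,p+1}\circ F_{p-1,p}$ (this follows from the definition, since each $F$ is the identity in low homological degrees and $S'$ in its top degree, and composing the saddle-map complexes just concatenates twists). This gives a morphism of short exact sequences
\[\begin{CD}
0 @>>> C(D_{p-1}) @>F_{p-1,p}>> C(D_p) @>>> Co(F_{p-1,p}) @>>> 0 \\
@. @VF_{p-1,p}VV @VF_{p,p+1}VV @VVV \\
0 @>>> C(D_p) @>F_{p,p+1}>> C(D_{p+1}) @>>> Co(F_{p,p+1}) @>>> 0
\end{CD}\]
and the induced map on cones is, on homology, the identity $M^\bullet[2(p-1)]\{\cdot\} \to M^\bullet[2(p-1)]\{\cdot\}$ after the shift is accounted for — because $F_{p,p+1}$ is the identity on all the matrix factorizations of $C(D_p)$ that survive into $Co(F_{p-1,p})$, namely those below homological degree $2p$, which is where $Co(F_{p-1,p})$ lives. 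Applying the naturality of the long exact sequence to this morphism of short exact sequences yields the commuting ladder with identity maps in the $M$-columns, which is precisely the asserted diagram (after rewriting the rows shifted so the leftmost term is $M^{-1}$ rather than $M^0$, a purely cosmetic reindexing of the connecting homomorphism).

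The main obstacle I expect is the verification that $M$ is a \emph{torsion} $\C[a]$-module in the equivariant case, i.e. that the cone $Co(F_{p-1,p})$ is acyclic after inverting $a$ (equivalently after setting $a = 1$). Over $\C$ this is essentially the statement that, in Gornik's deformed theory, adding a full twist in an oppositely-oriented pair of strands does not change the homology up to the grading collapse — which should follow because with $a \ne 0$ the maps $x_2 - x_4$ and the combined effect of $S', S$ become isomorphisms on the relevant matrix factorizations, or alternatively from the fact that the $s_n$-invariant and the rank of Gornik homology are crossing-change stable in a way compatible with Theorem \ref{changesn}. Nailing down that the deformed complex $Z \xrightarrow{S'} V \xrightarrow{x_2-x_4} V \xrightarrow{S} Z$ is acyclic over $\C[a,a^{-1}]$ will require unwinding Krasner's explicit matrix factorizations and the identities $S^2 = -(n+1)A$, $(x_2-x_4)\circ S = 0$ already recorded, together with the equivariant analogue of the computation in Appendix A of \cite{KRS2}; everything else in the proof is formal homological algebra.
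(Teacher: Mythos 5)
There is a genuine gap at the central step: your vertical maps are the wrong ones. The diagram asserted in Proposition \ref{usefulseq} has vertical maps of the form $H^0(K_0)\to H^2(K_1)\{2n\}$, i.e.\ maps which change the knot index \emph{and} shift homological degree by $2$ and quantum degree by $2n$; these are induced by the maps $G_{i,i+1}$ of Theorem \ref{stab2}, not by the $F$'s. Your ladder, built from the morphism of short exact sequences with verticals $F_{p-1,p}$ and $F_{p,p+1}$, produces degree-preserving verticals $H^k(K_{p-1})\to H^k(K_p)$, which is a different diagram. Moreover your key claim that the induced map on cones is ``the identity after the shift is accounted for'' cannot hold: by your own identification, $H(Co(F_{p-1,p}))\cong M[2(p-1)]\{-2n(p-1)\}$ while $H(Co(F_{p,p+1}))\cong M[2p]\{-2np\}$, and the induced map preserves both gradings, so it takes one graded piece of $M$ to a \emph{different} graded piece; it is not the identity, and there is no reason for it even to be an isomorphism. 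Consequently your ladder does not have identity maps in the $M$-columns and is not the diagram claimed (nor one that would support the later applications). The repair, which is the paper's proof, is to compare the short exact sequence for $F_{i,i+1}$ with the one for $F_{i+1,i+2}$ shifted by $[-2]\{2n\}$, taking $G_{i,i+1}$ and $G_{i+1,i+2}$ as the verticals: since $Co(F_{i,i+1})=Co(F_{0,1})[2i]\{-2ni\}$, the two cone terms then coincide on the nose and the induced map between them is the identity, because $G$ is the identity on every matrix factorization of $T_i$.

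Separately, you leave the torsion assertion essentially open (you flag it as ``the main obstacle''), and the routes you sketch are shaky: no hypothesis of the proposition forces $s_n(K_0)=s_n(K_1)$, and even equality of Gornik ranks would not by itself make $F_{0,1}$ a quasi-isomorphism after setting $a=1$; the proposed explicit acyclicity computation over $\C[a,a^{-1}]$ is not carried out. The paper's argument is short and formal and needs no such computation: if some $M^j$ contained a free $\C[a]$ summand, then exactness of a row sufficiently far down the (correct) commutative diagram would force $H^k(K_l)$ to be non-torsion for some homological degree $k>0$ and some $l$, contradicting the structure of the equivariant homology of a knot (free part concentrated in homological degree $0$, as recalled after Theorem \ref{snunknot}). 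Your finite-generation argument and your identification of the rows themselves (the long exact sequences of the cones $Co(F_{p-1,p})$, identified with shifts of $Co(F_{0,1})$) are fine.
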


\begin{proof}
Each row of the commutative diagram comes about from a short exact sequence of chain complexes, and the maps between the rows are induced by morphisms of these short exact sequences.  From Proposition \ref{coneF} we first observe that

\[ Co(F_{i, i+1}) = Co(F_{0,1})[2i]\{-2ni\} {\rm .} \]

\noindent It is then straightforward to check that for $i \geq 0$, there is a commutative map of short exact sequences of chain complexes

\[\minCDarrowwidth20pt\begin{CD}
0 @>>> C(D_i) @>F_{i, i+1}>> C(D_{i+1}) @>>> Co(F_{0,1})[2i]\{-2ni\} @>>> 0\\
@. @VG_{i,i+1}VV @VG_{i+1, i+2}VV @VidVV @.\\
0 @>>> C(D_{i+1})[-2]\{2n\} @>F_{i+1, i+2}>> C(D_{i+2})[-2]\{2n\} @>>> Co(F_{0,1})[2i]\{-2ni\} @>>> 0{\rm .}
\end{CD}\]

So setting $M = H(Co(F_{0,1}: C(D_0) \rightarrow C(D_1)))$ we are almost done, it only remains to argue that $M$ is finitely-generated and, in the equivariant case, torsion.

That $M$ is finitely-generated follows from $H(K_0)$ and $H(K_1)$ being finitely-generated and the first row of the commutative diagram.  In the equivariant case, suppose that $M$ were not torsion, so that there is some $i$ for which $\C[a]$ is a submodule of $M^i$.  Taking a row low enough in the commutative diagram, we see that this would force $H^k(K_l)$ to be non-torsion for some $k > 0$ and some knot $K_l$, a contradiction. Hence $M$ is torsion.
\end{proof}

\begin{proposition}
\label{anotherseq}
Let knots $K_0, K_1, K_2, \ldots$ be given as in Figure \ref{crossdiag}.  Then there is a commutative diagram in which the rows are exact, which has the following form:

\[\minCDarrowwidth17pt\begin{CD}
@>>> N^{-1} @>>> H^0(K_0) @>>> H^2(K_1)\{2n\} @>>> N^0 @>>> H^1(K_0) @>>>\\
@. @VidVV @VVV @VVV @VidVV @VVV \\
@>>> N^{-1} @>>> H^0(K_1) @>>> H^2(K_2)\{2n\} @>>> N^0 @>>> H^1(K_1) @>>> \\
@. @VidVV @VVV @VVV @VidVV @VVV \\
@>>> N^{-1} @>>> H^0(K_2) @>>> H^2(K_3)\{2n\} @>>> N^0 @>>> H^1(K_2) @>>>\\
@. @VidVV @VVV @VVV @VidVV @VVV
\end{CD}\]

Here $N$ is a bigraded, finitely-generated module (over $\C$ or $\C[a]$ depending on the variant of homology chosen).  Every map in the complex preserves the quantum grading.
\end{proposition}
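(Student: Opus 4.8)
The plan is to mimic the strategy of Proposition \ref{usefulseq}, but using the maps $G_{i,j}$ rather than the maps $F_{i,j}$ to build the short exact sequences, and the maps $F_{i,j}$ to provide the morphisms between them. First I would observe, as in the proof of Proposition \ref{coneF}, that the cone $Co(G_{i,i+1})$ of the map $G_{i,i+1}: T_i \to T_{i+1}[-2]\{2n\}$ is (up to the quantum shifts we are suppressing) the complex described in Proposition \ref{coneG}, and moreover that $Co(G_{i,i+1}) \cong Co(G_{0,1})$ for all $i \geq 0$ — the cone is homologically \emph{independent} of $i$, since the middle of the twist region just grows while the cone of $G$ only sees a bounded piece near homological degree $0$. (Here we do need to track the quantum shift: $G_{i,i+1}$ lands in $T_{i+1}[-2]\{2n\}$, which is exactly why the $H^0(K_{i+1})$ term in the $i$-th row acquires the $\{2n\}$ shift relative to $H^0(K_i)$, giving the term $H^2(K_{i+1})\{2n\}$ after the homological shift by $2$.) This gives for each $i \geq 0$ a short exact sequence of chain complexes
\[ 0 \to C(D_i) \xrightarrow{G_{i,i+1}} C(D_{i+1})[-2]\{2n\} \to Co(G_{0,1}) \to 0 \]
in which all maps are graded of homological and quantum degree $0$, and the flatness of $\C[x]/x^n \to \C$ again ensures exactness in the reduced setting. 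Setting $N = H(Co(G_{0,1}))$ and reading off the long exact sequence in homology (noting that $H^k(Co(G_{0,1})) = 0$ outside a narrow band of homological degrees near $0$, by Proposition \ref{coneG}) produces each row of the asserted diagram: $\cdots \to N^{-1} \to H^0(K_i) \to H^2(K_{i+1})\{2n\} \to N^0 \to H^1(K_i) \to \cdots$.

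Next I would construct the vertical morphisms. The maps $F_{i,i+1}: T_i \to T_{i+1}$ fit into a commutative diagram of short exact sequences
\[\minCDarrowwidth20pt\begin{CD}
0 @>>> C(D_i) @>G_{i,i+1}>> C(D_{i+1})[-2]\{2n\} @>>> Co(G_{0,1}) @>>> 0\\
@. @VF_{i,i+1}VV @VF_{i+1,i+2}VV @VidVV @.\\
0 @>>> C(D_{i+1}) @>G_{i+1,i+2}>> C(D_{i+2})[-2]\{2n\} @>>> Co(G_{0,1}) @>>> 0
\end{CD}\]
provided the left square commutes, i.e. that $F_{i+1,i+2}\circ G_{i,i+1} = G_{i+1,i+2}\circ F_{i,i+1}$ up to the relevant shifts. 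This is the point where the phrase in the text that "the maps $F_{i,j}$ and $G_{i,j}$ mesh well together" is being cashed in: both $F$ and $G$ are, away from the relevant endpoints, simply identity maps on the Krasner complex of Theorem \ref{input}, so the composites agree on all homological degrees except possibly near degree $0$ (where $F$ inserts the saddle $S'$) and near the top (where $G$ is identity onto the shifted complex). A short check using the Krasner description — essentially that inserting a saddle and then extending by identities is the same as extending and then inserting — gives the commutativity. Then the induced map on the quotient $Co(G_{0,1})$ is the identity, which yields the $\mathrm{id}$ labels on the $N^{-1}$ and $N^0$ columns, and the snake lemma / naturality of the long exact sequence produces the full commutative ladder.

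Finally, finite generation of $N$ follows exactly as for $M$ in Proposition \ref{usefulseq}: the first row exhibits $N^{-1}$ and $N^0$ as built from the finitely-generated modules $H^*(K_0)$ and $H^*(K_1)$ via a four-term exact sequence, and since $Co(G_{0,1})$ is a bounded complex of matrix factorizations its homology is finitely generated in every degree. I do not expect to need a torsion statement for $N$ here (unlike for $M$), so I would not include one. The main obstacle is the commutativity of the left square above — i.e. verifying carefully that $F$ and $G$ intertwine — since everything else is a formal consequence of the long exact sequence machinery together with the cone computations of Propositions \ref{coneF} and \ref{coneG}; one must be slightly careful because $F$ and $G$ are only chain maps up to the homotopies recorded in the definition (the identity $(x_2-x_4)\circ S = 0$ holds only up to homotopy), so strictly the commutative ladder should be set up either on the level of homology or after replacing these homotopy-commuting squares by genuinely commuting ones, which is harmless for the long-exact-sequence conclusion.
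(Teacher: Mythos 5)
Your proposal follows essentially the same route as the paper: short exact sequences built from the maps $G_{i,i+1}$ with cone $Co(G_{0,1})$ (independent of $i$), vertical morphisms given by the $F$-maps (suitably shifted) together with the identity on the cone, and $N = H(Co(G_{0,1}))$, with finite generation as in Proposition \ref{usefulseq}. The extra care you take over the commutativity of the left-hand square and the up-to-homotopy subtleties is reasonable but not a departure from the paper's argument, which simply asserts the same commutative map of short exact sequences.
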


\begin{proof}
Setting $N = H(Co(G_{0,1}: C(D_0) \rightarrow C(D_1)))$, this follows in the same way as before from the commutative map of short exact sequences:

\[\minCDarrowwidth20pt\begin{CD}
0 @>>> C(D_i) @> G{i, i+1} >> C(D_{i+1})[-2]\{2n\} @>>> Co(G_{0,1}) @>>> 0\\
@. @VF_{i,i+1}VV @VF_{i+1, i+2}[-2]\{2n\}VV @VidVV @.\\
0 @>>> C(D_{i+1}) @>G_{i+1,i+2}>> C(D_{i+2})[-2]\{2n\} @>>> Co(G_{0,1})@>>> 0{\rm .}
\end{CD}\]
\end{proof}

\begin{remark}
Although we do not prove it in this paper, we believe that the results of Propositions \ref{usefulseq} and \ref{anotherseq} hold for standard Khovanov homology over the integers, allowing analogues of results such as those of the next section to be deduced in this setting.
\end{remark}

\section{Topological and Structural results}
\label{topsection}

With Proposition \ref{usefulseq} in hand, we can now begin to prove Theorems \ref{changesn}, \ref{snunknot}, and \ref{knotsfromU}.  We note that Propositions \ref{usefulseq} and \ref{anotherseq} seem to contain much of the same information from our point of view, but we suspect that there are some useful applications of Proposition \ref{anotherseq} yet to be uncovered which make use of the fact that $Co(G_{0,1})$ is such a simple complex.

\begin{proof}[Proof of Theorem \ref{changesn}]
Let us work in the equivariant setting.  First note that the commutative diagram in Proposition \ref{usefulseq} can in fact be extended arbitrarily upwards.  This is because for any $l \geq 1$, we can add $l$ negative full twists to $K_0$ forming $\widetilde{K}_0 = K_{-l}$, and then make use of the short exact sequences for $C(\widetilde{K}_j) = C(K_{j-l})$.

Now suppose we are in the situation of Theorem \ref{changesn} where $s_n(K_{-1}) > s_n(K_0)$.

From Proposition \ref{usefulseq} we see that we have the row-exact commutative diagram

\[\begin{CD}
@>>> H^0(K_{-1})\{-2n\} @>>> H^0(K_{0})\{-2n\} @>>> M^2 @>>>\\
@. @VVV @VVV @VidVV\\
@>>> H^{2p}(K_{p-1})\{2np\} @>>> H^{2p}(K_p)\{2np\} @>>> M^2 @>>> {\rm .}
\end{CD}\]

Since the free parts of $H^0(K_{-1})$ and $H^0(K_{0})$ do not lie in the same quantum degrees by hypothesis and $M$ is torsion, this forces the map $H^0(K_{0}) \rightarrow M^2$ to be non-zero.  By commutativity of the righthand square, this also forces $H^{2p}(K_p) \rightarrow M^2$ to be non-zero, and in particular we have $H^{2p}(K_p) \not= 0$.
\end{proof}

We note that with a little more work we could say exactly what quantum degrees of $H^{2p}(K)$ are non-zero, in terms of $s_n(K_{-1})$, $s_n(K_0)$, and $p$.  Such exact information could be useful in investigating whether the $s_n$ homomorphisms are equivalent.  This precise knowledge is not necessary however to deduce Corollary \ref{cheapcor}, which follows immediately.

\begin{proof}[Proof of Theorem \ref{snunknot}]
Let us work in the equivariant setting.  Suppose we have the hypotheses of Theorem \ref{snunknot}.  Let $p \geq 2$, and consider the following part of the commutative diagram of Proposition \ref{usefulseq}

\[\begin{CD}
@>>> H^{-2p-2}(K_0) @>>> H^{-2p-2}(K_1) @>\psi >> M^{2p-2} @>>>  \\
@. @VVV @VVV @VidVV  \\
@>>> H^0(K_{p-1})\{2n(p-1)\} @>>> H^0(K_p) \{2n(p-1)\}  @> \phi >> M^{2p-2} @>>> {\rm .}
\end{CD}\]

Observe that since by hypothesis $K_0$ is the unknot we have $H^{-2p-2}(K_0) = H^{-2p-1}(K_0) = 0$ so that $\psi$ is an isomorphism.  Then the commutativity of the square involving both $\psi$ and $\phi$ tells us that $\phi$ restricted to the torsion part of $H^0(K_p)$ is a surjection.  Therefore there exists a decomposition $H^0(K_p) = Fr \oplus tor$ into free and torsion $\C[a]$-modules such that $\phi \vert_{Fr} = 0$.  But if $s_n(K_p) \not= s_n(K_{p-1})$ then we must have $\phi \vert_{Fr} \not= 0$, hence a contradiction.
\end{proof}

\begin{proof}[Proof of Theorem \ref{knotsfromU}]
Suppose now that we have the hypotheses of Theorem \ref{knotsfromU}.

First of all we would like to see that $M = \Delta$, the torsion part of $H(K_1)$.  This follows directly from the first row of the commutative diagram in Theorem \ref{usefulseq} and the fact that the map $H^0(K_0) \rightarrow H^0(K_1)$ is onto the free part of $H^0(K_1)$.  Indeed, if this map were not, we would either have a non-torsion part of $M^{-1}$ or we would have $s_n(K_1) < 0$.

Now let $p \geq 2$ and consider the following two commutative diagrams with exact rows

\[\begin{CD}
@>>> H^{i - 2(p-1)}(K_1) @>>>  M^{i-2p+2} @>\phi>> H^{i - 2(p-1) + 1}(K_0) @>>> \\
@. @VVV @VidVV @VVV  \\
@>>> H^i(K_{p})\{2n(p-1)\} @>>> M^{i - 2p+2} @>\psi>> H^{i+1}(K_{p-1})\{2n(p-1)\} @>>>{\rm ,}
\end{CD}\]

\[\begin{CD}
@>>> M^{i - 2p + 1} @>\phi'>> H^{i - 2(p-1)}(K_0) @>>> H^{i - 2(p-1)}(K_1) @>>>  \\
@. @VidVV @VVV @VVV  \\
@>>> M^{i - 2p + 1} @>\psi'>> H^i(K_{p-1})\{2n(p-1)\}  @>>>  H^i(K_{p})\{2n(p-1)\}  @>>> {\rm .}
\end{CD}\]

From the first diagram observe that $\phi = 0$ since $H(K_0)$ is non-torsion.  This implies that $\psi = 0$ by commutativity of the rightmost square.  For the same reason in the second diagram we see $\phi' = 0$, which implies that $\psi' = 0$ by the commutativity of the leftmost square.  This means that each row gives rise to short exact sequences

\[ 0 \rightarrow H^i(K_{p-1}) \{ 2n(p - 1) \} \rightarrow H^i(K_p)\{ 2n(p - 1)\} \rightarrow M^{i - 2p + 2} \rightarrow 0 {\rm .} \]

\noindent With this in hand, to prove the theorem it remains to see that every such short exact sequence splits to give isomorphisms

\[ H^i(K_p)\{ 2n(p - 1)\} = H^i(K_{p-1}) \{ 2n(p - 1) \} \oplus M^{i - 2p + 2} {\rm .} \]

A splitting map is found by running anticlockwise around the square

\[\begin{CD}
H^{i - 2(p-1)}(K_1) @>>>  M^{i-2p+2}  \\
 @VVV @VidVV  \\
 H^i(K_{p})\{2n(p-1)\} @>>> M^{i - 2p+2} {\rm ,}
\end{CD}\]

\noindent from $M^{i - 2p+2}$ to $H^i(K_{p})\{2n(p-1)\}$, which is possible since the top row of the square is an isomorphism when restricted to the torsion part of $H^{i - 2(p-1)}(K_1)$.
\end{proof}

\begin{proof}[Proof of Theorem \ref{knotsfromU2}]
We can copy the proof of Theorem \ref{knotsfromU} here.  In fact, this situation is simpler since there is no torsion hence every short exact sequence splits.  The one almost delicate point is to deduce that the map appearing in the top row commutative diagram in Theorem \ref{usefulseq}

\[ F_{0,1}: H^0(U) = H^0(K_0) \rightarrow H^0(K_1) \]

\noindent is an injection.  We know that it is an injection equivariantly and furthermore we have a description of the chain-homotopy type of the equivariant complex given in the discussion following the statement of Theorem \ref{snunknot}.  So it follows that $F_{0,1}$ is an injection in the unreduced case which is obtained by setting $a=0$ in the equivariant chain complexes.  The reduced case then follows from $F_{0,1}$ being an injection in the unreduced case and the generalized universal coefficients theorem for principal ideal domains.
\end{proof}

Earlier we promised a discussion of the case when $s_n(K_1) \not= 0$.  Notice that in this case our argument in the proof of Theorem \ref{knotsfromU} goes through as before for all $H^i(K_p)\{ 2n(p - 1)\}$ except when $2p - i = 2$.  Hence we can determine the homology groups of $K_p$ in terms of $H(K_{p-1})$ and $p$ except for $H^{2p-2}(K_p)$.  To fix the remaining homology group it suffices to know the image of the map $H^0(K_0) \rightarrow H^{2p-2}(K_{p-1})\{ 2n(p - 1) \}$.  We do not give a proof of this fact since it is not needed for our main application of Theorem \ref{knotsfromU}.

To begin our proof of Theorem \ref{noncomplete}, we first collect a few results from the literature on hyperbolic 3-manifolds.  We state the first theorem not as strongly as Thurston proved it, but strongly enough for us to use.

\begin{theorem}[Hyperbolic Dehn Surgery \cite{Thurston}]
\label{HDS}
Let $M$ be a cusped hyperbolic $3$-manifold with a distinguished cusp.  We write $M(1/p)$ for the result of filling the distinguished cusp with filling coefficient $1/p$.  Then $M(1/p)$ is hyperbolic except for a finite set of filling slopes and $M(1/p)$ converges to $M$ in the geometric topology as $p \rightarrow \infty$.
\end{theorem}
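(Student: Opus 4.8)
The statement as worded is Thurston's hyperbolic Dehn surgery theorem, specialized to filling a single distinguished cusp along the slopes $1/p$, so I would follow Thurston's deformation-theoretic proof. The plan is to realize the manifolds $M(1/p)$, for all $p$ with $\abs{p}$ large, as metric completions of incomplete hyperbolic structures on $\mathrm{int}(M)$ lying close to the complete structure in a finite-dimensional deformation space, and then to read off both the geometric convergence and the finiteness of exceptional slopes from this picture.

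First I would recall the deformation theory of the complete structure. The complete finite-volume hyperbolic structure on $\mathrm{int}(M)$ corresponds to a discrete faithful representation $\rho_0 : \pi_1(M) \to \mathrm{PSL}(2,\C)$, unique up to conjugacy by Mostow--Prasad rigidity, under which each cusp subgroup is rank-two parabolic. By the infinitesimal rigidity computation of Calabi--Weil (equivalently, Thurston's count of the first cohomology of $\pi_1(M)$ with adjoint coefficients), the $\mathrm{PSL}(2,\C)$-character variety of $\pi_1(M)$ is, near $[\rho_0]$, a smooth complex manifold of dimension equal to the number $k$ of cusps. On this neighborhood one has the generalized Dehn surgery coordinates $(u_1,\dots,u_k)$, where $u_j$ is the complex translation length of the holonomy of a fixed meridian $\mu_j$; the complete structure is $u=0$, and $u_j$ also determines the complex length $v_j = \tau_j u_j + O(u_j^2)$ of the longitude $\lambda_j$, with cusp shape $\tau_j \in \C \setminus \R$. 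Each small $u$ gives an incomplete hyperbolic structure on $\mathrm{int}(M)$ whose completion, when each nonzero surgery coefficient $(a_j,b_j)$ defined by $a_j u_j + b_j v_j = 2\pi i$ is a coprime integer pair, is the manifold obtained from $M$ by the corresponding Dehn filling (the cusps with $u_j=0$ being preserved).

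Next I would solve the surgery equations for our fillings: set $u_j = 0$ for the undistinguished cusps (automatically consistent, keeping those cusps complete), and at the distinguished cusp impose the equation for slope $1/p$, namely $u_1 + p\,v_1(u_1) = 2\pi i$. Since $v_1$ is holomorphic with $v_1(0)=0$ and $v_1'(0)=\tau_1 \ne 0$, the implicit function theorem gives, for all $\abs{p}$ sufficiently large, a unique small solution $u_1 = u_1(p)$ with $u_1(p) = 2\pi i/(p\tau_1) + O(p^{-2}) \to 0$ as $p \to \infty$; the core of the attached solid torus is then a closed geodesic of length $O(p^{-2})$. The one point that requires real work — and which I expect to be the main obstacle — is verifying that this metric completion is a genuine \emph{smooth} hyperbolic manifold, i.e. that the attaching occurs with cone angle exactly $2\pi$ rather than some other angle, so that the completion is indeed $M(1/p)$ with its complete hyperbolic metric. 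Thurston's original treatment established this directly from the local form of the developing map near the added core curve; it can be organized more cleanly via Hodgson--Kerckhoff's rigidity/deformation theory for hyperbolic cone manifolds, or, for $\abs{p}$ large, via the Gromov--Thurston $2\pi$-theorem applied to the very short filling curves.

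Finally I would extract the two conclusions. Geometric convergence: because $[\rho_0]$ is a smooth point of the character variety and $u_1(p) \to 0$, the holonomies of the filled structures converge to $\rho_0$, hence the developing maps converge uniformly on compact subsets of the universal cover; so on any fixed thick part of $M$ the metrics converge smoothly, while the short core geodesics, having length tending to $0$, recede to infinity in the limit and recreate the missing cusp. This is precisely convergence of $(M(1/p),\ast)$ to $(M,\ast)$ in the geometric topology with basepoint in the thick part. Finiteness of exceptional slopes: the argument shows $M(1/p)$ is hyperbolic whenever $\abs{p}$ exceeds some threshold $p_0$, so the only slopes $1/p$ that can fail to be hyperbolic are the finitely many with $\abs{p} \le p_0$. (One could make the geometry quantitative using the Neumann--Zagier volume asymptotics $\mathrm{vol}(M(1/p)) = \mathrm{vol}(M) - \pi^2/(p^2 A) + o(p^{-2})$, with $A$ the area of the cusp torus, but this refinement is not needed for the stated result.)
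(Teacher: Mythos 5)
This theorem is not proved in the paper at all: it is imported verbatim (in a deliberately weakened form) from Thurston's notes, as the attribution ``[Thurston]'' and the surrounding sentence ``We state the first theorem not as strongly as Thurston proved it, but strongly enough for us to use'' make clear. So there is no in-paper argument to compare yours against; the only question is whether your sketch is a faithful outline of the standard proof, and it is. You correctly identify the main ingredients: smoothness of the $\mathrm{PSL}(2,\C)$-character variety at the discrete faithful representation with complex dimension equal to the number of cusps, the generalized Dehn surgery coordinates $u_j$ with the Neumann--Zagier relation $v_j=\tau_j u_j+O(u_j^{2})$ (in fact $O(u_j^{3})$), the implicit-function-theorem solution $u_1(p)=2\pi i/(p\tau_1)+O(p^{-2})$ of the filling equation for slope $1/p$, the completion analysis showing the added core curve is a short geodesic with cone angle exactly $2\pi$ when the surgery coefficients are coprime integers, and the deduction of geometric convergence from convergence of holonomies together with the core geodesics shrinking and receding into a new cusp. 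You are also right to flag the completion/cone-angle step as the part requiring genuine analytic work, and your proposed alternatives (Hodgson--Kerckhoff deformation theory, or the $2\pi$-theorem for large $|p|$) are legitimate ways to discharge it. As a proof this is of course only a sketch of a deep theorem --- the smoothness of the character variety at $[\rho_0]$ and the local form of the developing map near the incomplete end each need a page or more of real argument --- but as an outline it is accurate and complete, and in particular it delivers exactly the two conclusions the paper actually uses: hyperbolicity of $M(1/p)$ for all but finitely many $p$, and geometric convergence $M(1/p)\to M$.
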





\begin{figure}
\centerline{
{
\psfrag{Ui}{$U_i$}
\psfrag{U}{$U$}
\psfrag{Tani}{$Tan_i$}
\psfrag{ol}{$\overline{Tan_i}$}
\psfrag{x2}{$x_2$}
\psfrag{x3}{$x_3$}
\psfrag{x4}{$x_4$}
\psfrag{T+(D)}{$T^+(D)$}
\includegraphics[height=1.2in,width=2.2in]{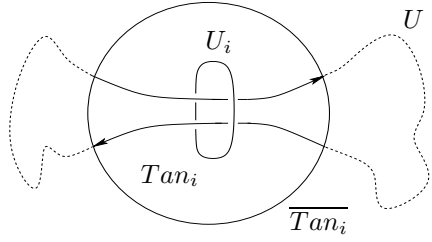}
}}
\caption{This diagram accompanies the statement of Theorem \ref{akio}.  We have drawn a tangle in a small $3$-ball $Tan_i \subset B^3$ which is a subtangle of the link $(U \cup U_i) \subset S^3$.  It consists of all of $U_i$ and two strands of $U$ which intersect a disc bounded by $U_i$ in two points, with signed count $0$.  (The rest of $U$ has been drawn schematically as a dotted line).  We denote by $\overline{Tan_i}$ the complement to this tangle so that $Tan_i \cup_\partial \overline{Tan_i} = U \cup U_i$.}
\label{tangle}
\end{figure}

We shall also need a result of Kawauchi's concerning special knots $K^*$ in $S^3$.

\begin{theorem}[Kawauchi \cite{Kaw}]
\label{akio}
For every $m>1$ there exists an $(m+1)$-component link 

\[U \cup U_1 \cup U_2 \cup \cdots \cup U_m \subset S^3 {\rm ,}\]

\noindent where $U$ is the unknot and $U_1 \cup U_2 \cup \ldots \cup U_m$ is the $m$-component unlink, satisfying the following properties:

\begin{enumerate}
\item Each $U_i$ bounds a disc intersecting $U$ in two points with signed count $0$.
\item For $i \not= j$, the link $U \cup U_i$ is distinct from the link $U \cup U_j$.
\item For any $i$, the result of $+1$-surgery on $U_i$ turns $U$ into a smoothly slice knot $K^*$, which is independent of $i$.
\item Define the tangles $\overline{Tan_i}$ as in Figure \ref{tangle}.  Each tangle $\overline{Tan_i}$ is hyperbolic, as is the branched double cover of each  $\overline{Tan_i}$.
\end{enumerate}
\end{theorem}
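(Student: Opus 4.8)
The plan is to reduce the theorem to the construction of a single smoothly slice knot $K^*$ together with $m$ pairwise inequivalent ways of realizing it as the result of inserting one full twist in two antiparallel strands of the unknot, and then to arrange hyperbolicity on top of this combinatorial data.

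First I would reformulate the data. A circle $U_i$ that bounds a disc $D_i$ meeting $U$ in two points of opposite sign is precisely a site at which one may insert a full twist in two antiparallel strands, and $+1$--surgery on $U_i$ performs exactly this twist (a Rolfsen twist). Thus conditions (1) and (3) say that $K^*$ is the knot obtained from the unknot $U$ by a single full antiparallel twist at each of the sites $U_1, \dots, U_m$ and that $K^*$ is smoothly slice; in particular $K^*$ is a nontrivial slice knot exhibited as having unknotting number one in $m$ visibly distinct ways, the unknotting discs being the $D_i$. The existence of such a $K^*$ with a family of mutually disjoint such circles (so that $U_1 \cup \cdots \cup U_m$ is an $m$--component unlink) is the content of Kawauchi's construction in \cite{Kaw}: using his theory of imitations one builds a slice knot that is an imitation of the unknot whose reflector produces the required disjoint family of unknotting circles $U_i$, each bounding the disc $D_i$. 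Concretely I would reproduce the relevant family from \cite{Kaw}, or equivalently exhibit it by an explicit band/clasp picture, so that (1) and (3) hold by construction.

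For condition (2) I would distinguish the two--component links $U \cup U_i$ by passing to a computable invariant. The linking numbers all vanish, so instead one uses something finer: for example the hyperbolic volume of $S^3 \setminus (U \cup U_i)$, or — in a way that meshes with the rest of the paper — the whole family of knots $K^{(i)}_p$ obtained by $1/p$--surgery on $U_i$ (adding $p$ twists at the site). If $U \cup U_i$ and $U \cup U_j$ were equivalent as marked links then $K^{(i)}_p = K^{(j)}_p$ for every $p$, so exhibiting a single $p$ for which these knots differ gives (2). Kawauchi's construction is set up so that this inequivalence can be verified, and that verification is part of what must be checked.

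Finally, condition (4): the exterior of the tangle $\overline{Tan_i}$ — the complement in $B^3$ of a ball neighbourhood of $U_i$ together with the two strands of $U$ through $D_i$ — must be hyperbolic, and likewise its double branched cover over those two strands. The approach is to build the family so that each of these manifolds contains no essential sphere, disc, torus, or annulus and is not Seifert fibred, whence hyperbolicity follows from geometrization for Haken manifolds (Kawauchi's imitations can be taken hyperbolic, and absence of essential surfaces can be read off the construction or checked on an explicit model). I expect this to be the main obstacle: one must secure hyperbolicity of both $\overline{Tan_i}$ and its branched double cover simultaneously with (1)–(3), and in particular keep the $m$ unknotting circles mutually inequivalent, so that the tension between being ``generic enough to be hyperbolic'' and ``structured enough that all $m$ twists unknot to the same slice $K^*$'' is exactly what makes the construction delicate. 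Once (4) is in place, Theorem \ref{HDS} will later guarantee that the Dehn fillings $\overline{Tan_i}(1/p)$ are hyperbolic for all but finitely many slopes and converge geometrically to $\overline{Tan_i}$, which is how the theorem will be used.
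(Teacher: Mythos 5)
The paper gives no proof of this theorem at all: it is imported wholesale from Kawauchi's theory of almost identical imitations \cite{Kaw}, and your proposal ultimately rests on the same citation, deferring the actual construction of the link, the inequivalence of the links $U \cup U_i$, and the hyperbolicity of the tangle complements $\overline{Tan_i}$ and their branched double covers to that paper. Your reformulations are sound and match how the result is used here (interpreting $+1$-surgery on a circle meeting $U$ in two points of signed count $0$ as a Rolfsen twist inserting a full antiparallel twist, so that $K^*$ is a slice knot with $m$ distinct unknotting sites, and obtaining hyperbolicity in (4) from geometrization once essential surfaces are excluded), so there is no divergence from the paper to report — only the caveat that, as a standalone argument, every step you flag as ``to be checked'' is precisely what \cite{Kaw} supplies.
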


\begin{proof}[Proof of Theorem \ref{noncomplete}]
Consider Figure \ref{tangle}, $Tan_i$ is an example of a \emph{simple} tangle (in other words prime and atoroidal).  Furthermore, we know by item (4) of Theorem \ref{akio} that $\overline{Tan_i}$ (the complement of $Tan_i$) is hyperbolic.

We are in the situation where we can apply Lemma 2 of \cite{Soma}.  This tells us that if we glue back $Tan_i$ to $\overline{Tan_i}$, then the result (which is $Tan_i \cup_\partial \overline{Tan_i} = U \cup U_i$) is a hyperbolic link.

We now write $K^i_N$ for the result of doing $(1/N)$-surgery on $U_i$ to the knot $U$, so that $K^i_0 = U$ for each $i$.  By item (3) of Theorem \ref{akio}, we see that $K^i_1$ is the knot $K^*$ for each $i = 1, 2, \ldots, m$.

Since the complement of $U \cup U_i$ is atoroidal for each $i$, Theorem \ref{HDS} tells us that the complement of $K^i_N$ is hyperbolic for large enough $N$ and that these complements converge in the geometric topology to the complement of $U \cup U_i$ as $N \rightarrow \infty$.  Since the meridians of the $K^i_N$ converge to the meridian to $U$, the sequence of knots $K^i_N$ determines the link complement to $U \cup U_i$ as well as the meridional curve to $U$.  By filling along the meridian and taking $U$ isotopic to any longitude relative to the meridian, we see this determines $U$ inside the solid torus complement to $U_i$.  Since there is only one way to fill the boundary of this solid torus to get $U = K^i_0$ unknotted inside $S^3$, we have determined the whole link $U \cup U_i$.

Hence there exists an $N$ such that the complement to $K^i_N$ is not diffeomorphic to the complement to $K^j_N$ whenever $i \not= j$.  Since the knot complement determines the knot, we know that for this $N$ we have $K^i_N \not= K^j_N$ whenever $i \not= j$.  This set $\{K^1_N, K^2_N, \ldots , K^m_N \}$ will be the $m$ distinct knots we are required to exhibit.

Because $K^*$ is slice we have $s_n(K^i_1 = K^*) = 0$ for all $i = 1, 2, \ldots m$.  This means that we can apply Theorem \ref{knotsfromU} to see that $H(K^i_N) = H(K^j_N)$ for all $1 \leq i,j \leq m$.

It remains to see that each $K^i_N$ is prime and not $2$-bridge.  Primeness follows from the hyperbolicity of $K^i_N$.

The branched double cover of $K^i_N$ is a Dehn filling of the branched double cover of $\overline{Tan_i}$, with filling slope determined by $N$.  Again, Theorem \ref{HDS} inplies that for $N$ large enough, the branched double cover of $K^i_N$ is hyperbolic.  We know that branched double covers of $2$-bridge knots are lens spaces, which are not hyperbolic.  Hence $K^i_N$ is not $2$-bridge.
\end{proof}

\begin{figure}
\centerline{
{
\psfrag{Ui}{$U_i$}
\psfrag{Tani}{$Tan_i$}
\psfrag{x1}{$x_1$}
\psfrag{x2}{$x_2$}
\psfrag{x3}{$x_3$}
\psfrag{x4}{$x_4$}
\psfrag{T+(D)}{$T^+(D)$}
\includegraphics[height=3.2in,width=3.2in]{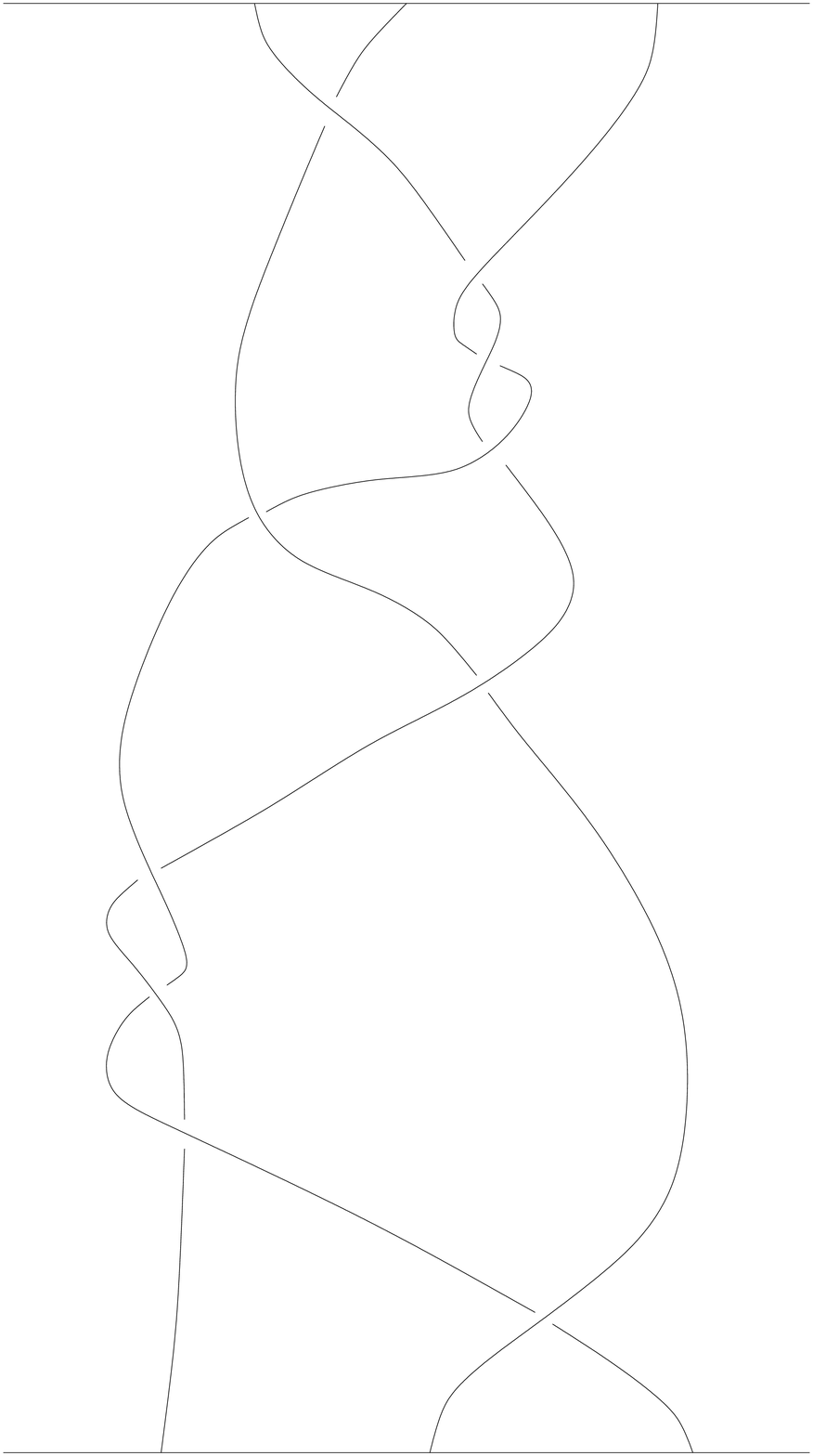}
}}
\caption{Here is an example of a Brunnian pure braid - a pure braid with the property that the removal of any strand results in a trivial braid.}
\label{brunnian}
\end{figure}

\subsection{An example of the construction of a knot pair with isomorphic knot homologies}
\label{examplesandthat}

Kawauchi used the theory of \emph{almost identical imitation} to create knots $K^*$ with multiple unknotting sites \cite{Kaw}.  In the proof of Theorem \ref{noncomplete} we used these knots $K^*$ in an essential way to create distinct knots with isomorphic Khovanov-Rozansky knot homologies.  If we wished to draw a diagram of such knots it would be necessary to understand in detail the theory of almost identical imitation.  However, if one is prepared to work on a more \emph{ad hoc} basis then it is easy to create examples of knots with isomorphic knot homologies.

One such \emph{ad hoc} construction is based on pure Brunnian braids (pure braids that become equivalent to a trivial braid when any strand is removed).  We have drawn an example of such a braid (on three strands) in Figure \ref{brunnian}.

From the braid drawn in Figure \ref{brunnian} we obtain the tangle drawn in Figure \ref{megaexample}.  This tangle can be completed to a knot by filling the slots $X, Y, Z$ with other tangles.  We now abuse notation by referring to the tangle corresponding to the chain complex $T_i$ itself by $T_i$.  We denote by $K^X_i$ the knot obtained by filling $X$ with $T_i$, $Y$ with $T_1$, and $Z$ with $T_{-1}$, and denote by $K^Y_i$ the knot obtained by filling $X$ with $T_1$, $Y$ with $T_i$ and $Z$ with $T_{-1}$.

Note that $K^X_0 = K^Y_0 = U$, the unknot and that $K^X_1 = K^Y_1$.  Furthermore since $K^X_1$ can be transformed into the unknot both by a positive-to-negative crossing change (in place $X$, say) and by a negative-to-positive crossing change (in place $Z$), we must have  $s_n(K^X_1) = s_n(K^Y_1) = 0$.

Hence it follows from Theorem \ref{knotsfromU} that $K^X_i$ and $K^Y_i$ have isomorphic homologies for all $i \geq 2$.

One can check that $K^X_2 \not= K^Y_2$ using \emph{SnapPea}.  In fact, they have different hyperbolic volumes so they are not even mutant by a result of Ruberman's \cite{Rub}.

\begin{figure}
\centerline{
{
\psfrag{X}{$X$}
\psfrag{Y}{$Y$}
\psfrag{Z}{$Z$}
\psfrag{x2}{$x_2$}
\psfrag{x3}{$x_3$}
\psfrag{x4}{$x_4$}
\psfrag{T+(D)}{$T^+(D)$}
\includegraphics[height=3.2in,width=2.9in]{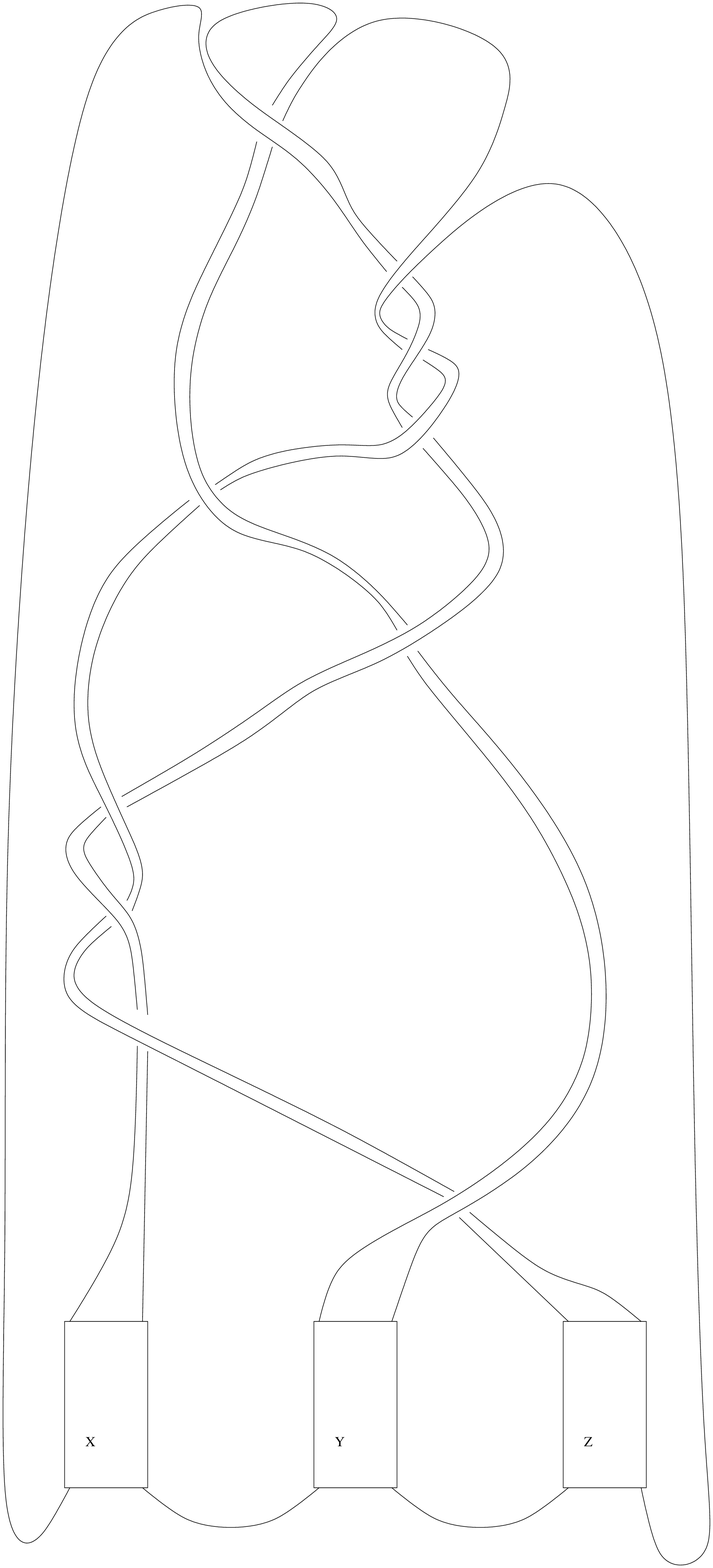}
}}
\caption{Here we show a tangle determined by the braid drawn in Figure \ref{brunnian}.  There are three boundary components to this tangle, each will be filled by some tangle corresponding to the chain complex $T_i$ as in Figure \ref{tk}.}
\label{megaexample}
\end{figure}

\subsection{Pairs of mutant knots with isomorphic knot homologies}
\label{mutantpairs}

The Conway and the Kinoshita-Terasaka (KT) knots are the first (measured by crossing number) example of a pair of mutant knots.  In \cite{MV} Mackaay and Vaz use techniques given by Rasmussen in \cite{Ras2} in order to compute that all reduced Khovanov-Rozansky homologies of the Conway and the KT knots agree.  Since it is easily observed that the KT knot and the Conway knot have unknotting number equal to $1$, we can build upon this computation and give an infinite number of mutant pairs.

\begin{figure}
\centerline{
{
\psfrag{Ui}{$U_i$}
\psfrag{Tani}{$Tan_i$}
\psfrag{Ti}{$T_i$}
\psfrag{x2}{$x_2$}
\psfrag{x3}{$x_3$}
\psfrag{x4}{$x_4$}
\psfrag{T+(D)}{$T^+(D)$}
\includegraphics[height=2.1in,width=2.7in]{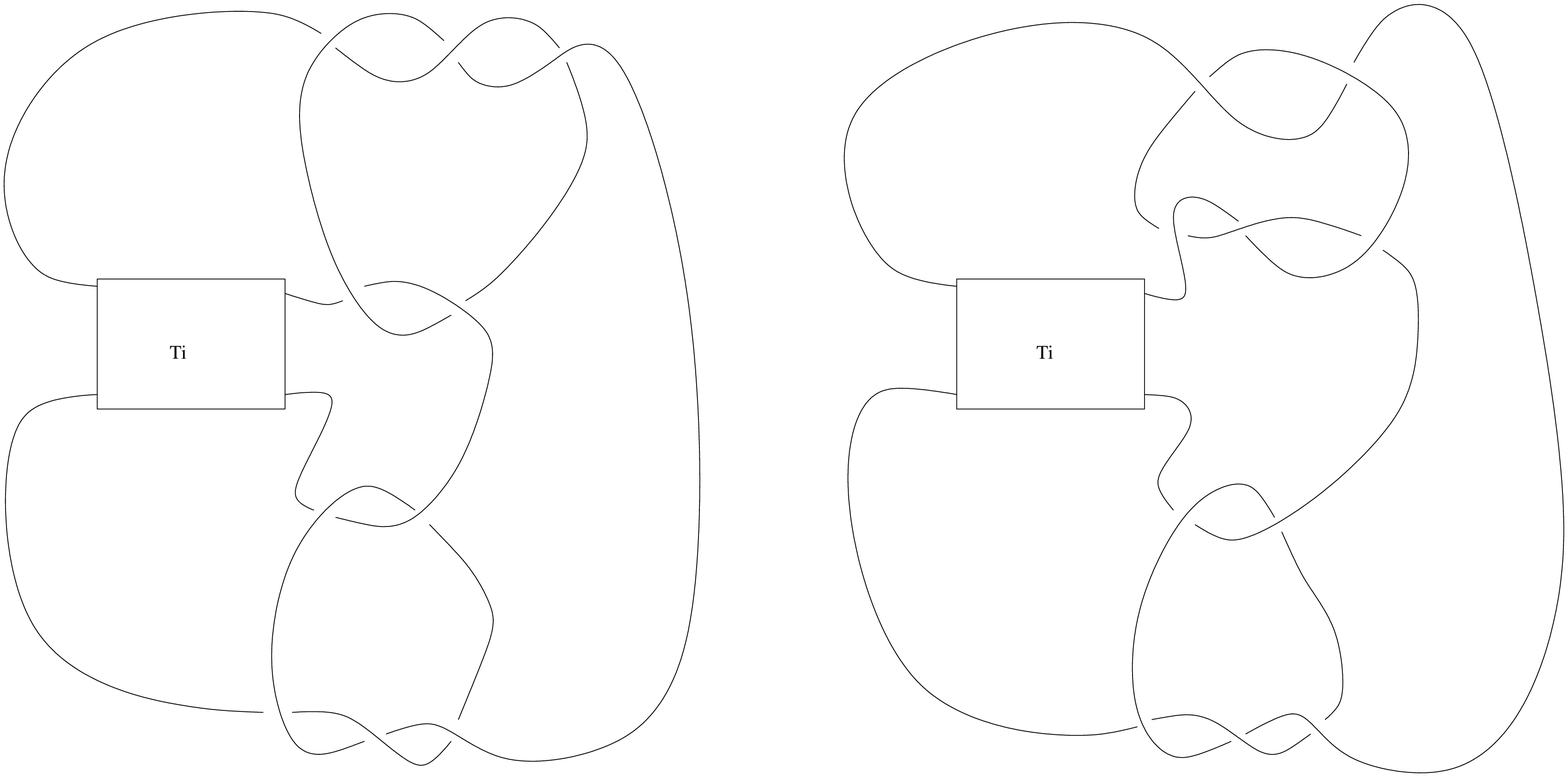}
}}
\caption{This diagram shows two families of knots $K^{KT}_i$ and $K^{C}_i$.  On both sides the unknot $U$ occurs when we put the tangle $T_0$ where indicated $K^{KT}_0 = K^{C}_0 = U$.  When we add the tangle $T_1$ we get the Kinoshita-Terasaka knot $K^{KT}_1$ on the left and the Conway knot $K^{C}_1$ on the right.}
\label{ConwayKT}
\end{figure}

\begin{theorem}
There exist an infinite number of mutant pairs of prime knots that have isomorphic reduced Khovanov-Rozansky homology groups.
\end{theorem}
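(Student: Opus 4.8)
The plan is to run the twisting machinery of Theorem \ref{knotsfromU2} on the two families $K^{KT}_i, K^C_i$ of Figure \ref{ConwayKT}, using the Kinoshita--Terasaka/Conway coincidence of Mackaay--Vaz \cite{MV} as the base case, and then to harvest infinitely many genuinely distinct prime pairs by the hyperbolic Dehn surgery argument already used in the proof of Theorem \ref{noncomplete}. First I would record the structure built into Figure \ref{ConwayKT}: one has $K^{KT}_0 = K^C_0 = U$; the ball carrying the tangle $T_i$ can be taken disjoint from the Conway $2$-sphere realising the KT/Conway mutation, so $K^{KT}_i$ and $K^C_i$ are mutants for every $i \ge 0$; and $K^{KT}_1, K^C_1$ are the KT and Conway knots. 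Then I would verify the hypotheses of Theorem \ref{knotsfromU2} (equivalently, of Theorem \ref{knotsfromU}) for each family: the condition $K_0 = U$ holds by construction, and the only non-formal point is $s_n(K^{KT}_1) = s_n(K^C_1) = 0$ for every $n$. For the KT knot this is immediate from its being smoothly slice. For the Conway knot one observes, exactly as in Subsection \ref{examplesandthat}, that it is unknotted both by a positive-to-negative and by a negative-to-positive crossing change --- one of these is the clasp move $T_1 \to T_0$ visible in Figure \ref{ConwayKT}, and the existence of the other is an easy diagrammatic observation (or may be transported from the KT side through the mutation, the relevant crossing-change sites lying off the Conway sphere) --- whereupon the inequality of Theorem \ref{changesn} together with Corollary 3 of \cite{Liv} squeezes $0 \le s_n(K^C_1) \le 0$.

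With the hypotheses in hand, Theorem \ref{knotsfromU2} applies to both families with the \emph{same} module: writing the reduced homology as $H(K^{KT}_1) = \C[0]\{0\} \oplus \Delta$, the Mackaay--Vaz computation gives $H(K^C_1) = \C[0]\{0\} \oplus \Delta$ with the same $\Delta$, so the iterative formula $H(K_p) = H(K_{p-1}) \oplus \Delta[2p]\{2n(1-p)\}$ and induction on $p$ with base case $p = 1$ yield $H(K^{KT}_p) \cong H(K^C_p)$ as bigraded reduced $sl(n)$ homology for all $p \ge 1$ and all $n$. This produces a countable list of mutant pairs with identical reduced homology, and it remains to extract infinitely many pairs each consisting of two distinct prime knots. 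For this I would argue as in the proof of Theorem \ref{noncomplete}: realise $K^{KT}_p$ and $K^C_p$ as the results of $1/p$-surgery on a companion unknot encircling the twist region, so that --- provided the complement of the relevant two-component companion link is hyperbolic --- Theorem \ref{HDS} makes all these knots hyperbolic, hence prime, for $p$ large; makes the $K^{KT}_p$ (and likewise the $K^C_p$) pairwise distinct as $p \to \infty$; and makes $K^{KT}_p \neq K^C_p$ provided the two companion links have non-homeomorphic complements.

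The main obstacle is exactly this last, geometric, step. One must verify hyperbolicity (or at least atoroidality) of the companion link complement so that Theorem \ref{HDS} applies, paralleling item (4) of Theorem \ref{akio} and the use of \cite{Soma} and \cite{Thurston} in the proof of Theorem \ref{noncomplete}; and one must show that the two companion links --- which, being mutants, have equal hyperbolic volume by Ruberman \cite{Rub}, so volume alone is of no help --- are genuinely non-homeomorphic, so that the KT and Conway families stay apart for large $p$. The latter can be arranged by a \emph{SnapPea} computation in the spirit of Subsection \ref{examplesandthat}, or by checking $K^{KT}_p \neq K^C_p$ directly for one large $p$ and propagating via the convergence. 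By contrast the homological content is a routine application of Theorem \ref{knotsfromU2} once the vanishing $s_n(K^{KT}_1) = s_n(K^C_1) = 0$ is in hand, and that vanishing is in turn forced by the unknotting-number-one observation.
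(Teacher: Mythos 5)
Your proposal is correct and takes essentially the same route as the paper: the two families of Figure \ref{ConwayKT} with the Mackaay--Vaz computation \cite{MV} as base case, vanishing of $s_n$ for the KT and Conway knots, Theorem \ref{knotsfromU}/\ref{knotsfromU2} to propagate the isomorphism of reduced homologies to all $p$ with the same $\Delta$, and Thurston's Theorem \ref{HDS} to obtain distinctness and primeness (via hyperbolicity) for large $p$. The only difference is one of explicitness: you spell out, and flag as needing verification, the mutancy of each pair, the $s_n$ squeeze for the Conway knot, and the hyperbolicity/distinctness of the companion link complements, points the paper's proof dispatches with a single appeal to Theorem \ref{HDS}.
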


\begin{proof}
We work with reduced homology.  Consider the two families of knots $K^C_i$ and $K^{KT}_i$ shown in Figure \ref{ConwayKT}.  Since the reduced homologies agree $H(K^C_1) = H(K^{KT}_1)$, $K^C_0 = K^{KT}_0 = U$, and $s_n(K^C_1) = s_n(K^{KT}_1) = 0$, we can apply Theorem \ref{knotsfromU} to see that we have isomorphic homology groups $H(K^C_i) = H(K^{KT}_i)$ for all $i \geq 2$.  Thurston's Theorem \ref{HDS} tells us that $K^C_i \not= K^{KT}_i$ for large enough $i$, and since each is hyperbolic each must be prime.
\end{proof}

\end{document}